\newtheorem*{remark}{Remark}
\newcommand{\play}{\displaystyle}
\newcommand{\rt}{\rightarrow}
\DeclareMathOperator{\sgn}{sgn}
\DeclareMathOperator{\End}{End}
\DeclareMathOperator{\im}{Im}
\DeclareMathOperator{\Gal}{Gal}
\newcommand{\ZZ}{\mathbb{Z}}
\newcommand{\QQ}{\mathbb{Q}}
\newcommand{\CC}{\mathbb{C}}
\newcommand{\triv}{\mathbb{1}}
\newtheorem{theorem}{Theorem}[section]
\newtheorem{lemma}[theorem]{Lemma}
\newtheorem{proposition}[theorem]{Proposition}
\newtheorem{corollary}[theorem]{Corollary}
\newtheorem{conjecture}[theorem]{Conjecture}
\newtheorem{definition}[theorem]{Definition}
\title{Identities between Hecke Eigenforms}
\author{D. Bao}
\begin{document}

\maketitle

\begin{abstract}
In this paper, we study solutions to $h=af^2+bfg+g^2$, where $f,g,h$ are Hecke newforms with respect to $\Gamma_1(N)$ of weight $k>2$ and $a,b\neq 0$. We show that the number of solutions is finite for all $N$.  Assuming Maeda's conjecture, we prove that the Petersson inner product $\langle f^2,g\rangle$ is nonzero, where $f$ and $g$ are any nonzero cusp eigenforms for $SL_2(\ZZ)$ of weight $k$ and $2k$, respectively. As a corollary, we obtain that, assuming Maeda's conjecture, identities between cusp eigenforms for $SL_2(\ZZ)$ of the form $X^2+\sum_{i=1}^n \alpha_iY_i=0$ all are forced by dimension considerations. We also give a proof using polynomial identities between eigenforms that the $j$-function is algebraic on zeros of Eisenstein series of weight $12k$.

\end{abstract}

\section{Introduction}
Fourier coefficients of Hecke eigenforms often encode important arithmetic information. Given an identity between eigenforms, one obtains  nontrivial relations between their Fourier coefficients and may in further obtain solutions  to certain related problems in number theory. 
For instance, let $\tau(n)$ be the $n$th Fourier coefficient of the weight 12 cusp form $\Delta$ for $SL_2(\ZZ)$ given by
$$\Delta=q\prod_{n=1}^{\infty}(1-q^n)^{24}=\sum_{n=1}^{\infty}\tau(n)q^n$$
and define the weight 11 divisor sum function $\sigma_{11}(n)=\sum_{d|n}d^{11}$. Then
 the Ramanujan congruence  $$\tau(n)\equiv \sigma_{11}(n) \mod 691$$

can be deduced easily from the identity $$E_{12}-E_6^2=\frac{1008\times 756}{691}\Delta,$$

where $E_6$ (respectively $E_{12}$) is the Eisenstein series of weight 6 (respectively 12) for $SL_2(\ZZ)$. 

Specific polynomial identities between Hecke eigenforms have been studied by many authors.
Duke\cite{Duke} and Ghate\cite{ghate2000monomial} independently investigated identities of the form $h=fg$ between eigenforms with respect to the full modular group $\Gamma=SL_2(\ZZ)$ and proved that there are only 16 such identities. 
%J.B. Conrey and D.W. Farmer  \cite{Farmer}, related this problem with Maeda's conjecture and nonvanishing of the central values of $L$ functions of cusp eigenforms.
  Emmons\cite{MR2180510} extended the  search to $\Gamma_0(p)$ and found 8 new cases. Later Johnson\cite{johnson2013hecke} studied the above equation for Hecke eigenforms with respect to $\Gamma_1(N)$ and obtained a complete list of $61$ eigenform product identities. 
 J. Beyerl and K. James  and H. Xue\cite{MR3119178} studied the problem of when an eigenform for $SL_2(\ZZ)$ is divisible by another eigenform and proved that this can only occur in very special cases.     
Recently, Richey and Shutty \cite{richey2013polynomial} studied  polynomial identities and showed that  for a fixed polynomial  (excluding trivial ones), there are only finitely many decompositions of normalized Hecke eigenforms for $SL_2(\ZZ)$ described by a given polynomial.  

Since the product of two eigenforms is rarely an eigenform, in this paper we loosen the  constraint and study solutions to the equation $h=P(f,g)$, where $h,f,g$ are Hecke eigenforms and $P$ is a general degree two polynomial. The ring of modular forms is graded by weight, therefore $P$ is necessarily homogeneous, i.e., $P(f,g)=af^2+bfg+cg^2$. With proper normalization, we can assume $c=1$.

The first main result of this paper is the following:

\vspace{.1in}
 \begin{theorem} \label{fin}
	For all $N\in\ZZ_+$ and  $a,b\in\mathbb{C}^{\times}$, there are at most finitely triples $(f,g,h)$ of newforms with respect to $\Gamma_1(N)$ of weight $k>2$ satisfying the equation  
\begin{equation}\label{polyid}
h=af^2+bfg+g^2.
\end{equation}	
\end{theorem}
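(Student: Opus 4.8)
The plan is to reduce the statement to a bound on the weight and then to extract that bound from the multiplicative structure of the Hecke eigenvalues of $h$ together with Deligne's bound on the Fourier coefficients of $f$ and $g$. The reduction is routine: for each fixed $k$ the spaces $S_k(\Gamma_1(N))$ and $S_{2k}(\Gamma_1(N))$ are finite dimensional, so there are only finitely many newforms $f,g$ of weight $k$ and $h$ of weight $2k$ of level $N$, hence finitely many triples of a given weight. It therefore suffices to produce a bound $K=K(N,a,b)$ such that \eqref{polyid} forces $k\le K$.

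To bound $k$, I would fix the smallest prime $p\nmid N$ and compare the two sides of \eqref{polyid} coefficient by coefficient. Writing $f=\sum a_n(f)q^n$, and similarly for $g,h$, equality of $q$-expansions gives $a_m(h)=a\,a_m(f^2)+b\,a_m(fg)+a_m(g^2)$ for every $m$, where the coefficients of the products are the additive convolutions $a_m(f^2)=\sum_{i+j=m}a_i(f)a_j(f)$, and likewise for $fg$ and $g^2$. Because $h$ is a Hecke eigenform of weight $2k$ and some nebentypus $\chi$, its coefficients obey the recursion $a_{p^2}(h)=a_p(h)^2-\chi(p)p^{2k-1}$, in which $\chi(p)p^{2k-1}$ has modulus exactly $p^{2k-1}$. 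The heart of the argument is that the product side of \eqref{polyid} can never supply a coefficient of this size at the index $p^2$.

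Quantitatively, Deligne's bound $|a_n(f)|\le d(n)n^{(k-1)/2}$, with $d(\cdot)$ the divisor function (and the same bound for $g$), gives for the convolution
\begin{equation*}
|a_{p^2}(f^2)|\le\sum_{i+j=p^2}d(i)d(j)(ij)^{(k-1)/2}\ll_p p^{2k}\,2^{-k},
\end{equation*}
since $ij\le(p^2/2)^2$ on the range of summation; the same estimate holds for $fg$ and $g^2$, and an identical computation yields $|a_p(\mathrm{RHS})|\ll_p (p/2)^{k}$, whence $a_p(h)^2\ll_p(p^2/4)^k=o(p^{2k-1})$. Feeding this into the Hecke recursion forces $|a_{p^2}(h)|\ge p^{2k-1}-o(p^{2k-1})$, while the convolution estimate gives $|a_{p^2}(h)|=|a_{p^2}(\mathrm{RHS})|\ll_{p,a,b}p^{2k}2^{-k}=o(p^{2k-1})$. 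These two conclusions are incompatible once $k$ exceeds a bound depending only on $p$ (hence on $N$) and on $|a|+|b|+1$, which is the desired $K(N,a,b)$.

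The main obstacle I anticipate is the robustness of the convolution estimate: the decisive saving is the factor $2^{-k}$ coming from the exponent $(k-1)/2$ in Deligne's bound, so the argument is cleanest precisely when $f$ and $g$ are cuspidal. If one must also allow Eisenstein eigenforms among $f,g$, whose coefficients are of size $n^{k-1}$ rather than $n^{(k-1)/2}$, the gap between the two sides at $p^2$ shrinks to a factor of order $p^{-1}$, and closing the argument then requires either isolating the Eisenstein cases separately or comparing several prime-power coefficients simultaneously. The nebentypus and the bad primes are handled throughout by the choice $p\nmid N$, which keeps the Hecke recursion in its simplest form.
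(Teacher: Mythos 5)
Your reduction to bounding the weight matches the paper's first step, and your Deligne-plus-Hecke-recursion estimate is essentially sound where it applies; the problem is that it applies only when both $f$ and $g$ are cuspidal, and that case is vacuous. If $a_0(f)=a_0(g)=0$, then $af^2+bfg+g^2$ has no constant or $q^1$ term, so $a_1(h)=0$, and a normalized eigenform with vanishing first Fourier coefficient cannot be a newform (this is the paper's opening observation in Section 3, via equation (\ref{normalize}) and Theorem \ref{main lemma}); no quantitative estimate is needed there. The entire content of the theorem therefore sits in the cases you explicitly set aside in your final paragraph, where at least one of $f,g$ is an Eisenstein eigenform. In that regime your strategy cannot be patched by a finer convolution estimate, because genuine solutions exist --- e.g.\ $E_{24}=a\Delta^2+bE_{12}\Delta+E_{12}^2$ and the weight-$16$ analogue displayed in Section 3 --- so no argument showing that the two sides of (\ref{polyid}) have incompatible sizes for all large $k$ can be correct. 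Any proof must make the finiteness hinge on the fixed nonzero constant $b$.

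That is exactly what the paper does. After the trivial case above and the case $a_0(f),a_0(g)\neq 0$ (where Lemma \ref{samech} and Proposition \ref{SameChar} force $f=g$, reducing to the product identities $h=fg$ classified by Johnson), the essential case is $f$ cuspidal, $g=E_k^{\triv,\varphi}$, $h=E_{2k}^{\triv,\varphi^2}$, normalized to have constant term $1$. Comparing the coefficients of $q^1$ in (\ref{polyid}) gives the linear relation $b+2\beta=\alpha$ of equation (\ref{q}), with $\alpha=-4k/B_{2k,\varphi^2}$ and $\beta=-2k/B_{k,\varphi}$ as in (\ref{a}) and (\ref{b}); the two-sided bound (\ref{c}) on generalized Bernoulli numbers then shows $|\alpha|,|\beta|\to 0$ as $k\to\infty$, uniformly in the conductor, so $|b|\leq|\alpha|+2|\beta|$ bounds $k$ in terms of $b$, and for fixed $k$ the same relation bounds the conductor and hence $N$. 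So the ingredient missing from your proposal is not a sharper bound at $q^{p^2}$ but the growth of generalized Bernoulli numbers --- equivalently, the smallness of the Fourier coefficients of a constant-term-one Eisenstein series --- fed into the single coefficient identity at $q^1$, where the constant $b$ enters linearly.
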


Theorem \ref{fin} is obtained by estimating the Fourier coefficients of cusp eigenforms and its proof is presented in Section 3. In the same section, as further motivation for studying identities between eigenforms, we also give an elementary proof via identities that: 

\vspace{.1in}
\begin{theorem}	
Let $E_{12k}(z)$ be the Eisenstein series of weight $12k$, and	$$A:=\{\rho\in \mathfrak{F}|E_{12k}(\rho)=0 \text{ for some }  k \in\ZZ_{+} \}$$
	be the set of zeros for all Eisenstein series of weight $12k$ with $k\geq 1$ in the fundamental domain $\mathfrak{F}$ for $SL_2(\ZZ)$. If $j$ is the modular $j$-function and $\rho\in A$, then $j(\rho)$ is an algebraic number. 
 
\end{theorem}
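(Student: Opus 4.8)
The plan is to realize the quotient $E_{12k}/\Delta^{k}$ as a polynomial in the modular $j$-invariant with rational coefficients, and then to read off algebraicity of $j(\rho)$ directly from the vanishing of $E_{12k}$ at $\rho$. Recall that $\Delta=q\prod_{n\geq 1}(1-q^n)^{24}$ is a weight-$12$ cusp form that has no zeros on the upper half-plane $\mathfrak{H}$ and a simple zero at the cusp, while $E_{12k}$ is holomorphic on $\mathfrak{H}$ with $E_{12k}(\infty)=1$. Consequently $F:=E_{12k}/\Delta^{k}$ is a weight-$0$ meromorphic modular form for $SL_2(\ZZ)$, holomorphic on $\mathfrak{H}$, whose only singularity is a pole at $\infty$ of order exactly $k$ (since $\Delta^{k}=q^{k}(1+O(q))$ and $E_{12k}=1+O(q)$).

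First I would invoke the standard structure theorem for $SL_2(\ZZ)$-invariant functions: every modular function holomorphic on $\mathfrak{H}$ with a pole of order at most $k$ at the cusp is a polynomial of degree at most $k$ in $j$. Applying this to $F$ yields an identity $E_{12k}=\Delta^{k}P_k(j)$ with $\deg P_k=k$. Since $j=E_4^3/\Delta$, this is precisely a polynomial identity between the eigenforms $E_{12k}$, $E_4$, and $\Delta$, namely $E_{12k}=\sum_{m=0}^{k} c_m E_4^{3m}\Delta^{k-m}$, where $c_m$ are the coefficients of $P_k$.

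The crucial point is that the $c_m$ are rational. To see this I would expand everything in $q$: the $j$-invariant lies in $\ZZ((q))$ with $j^m=q^{-m}+O(q^{-m+1})$; the form $\Delta$ has integer Fourier coefficients and $\Delta=q+O(q^2)$, so $\Delta^{-k}\in\ZZ((q))$; and the Eisenstein series $E_{12k}=1-\tfrac{24k}{B_{12k}}\sum_{n\geq 1}\sigma_{12k-1}(n)q^n$ has rational Fourier coefficients because the Bernoulli number $B_{12k}$ is rational. Hence $F=E_{12k}/\Delta^{k}\in\QQ((q))$, and matching principal parts against the family $\{j^m\}_{m=0}^{k}$, whose leading Laurent coefficients form a unitriangular integer pattern, determines $c_0,\dots,c_k$ recursively as rational numbers. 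In particular $P_k\in\QQ[x]$ is a nonzero (indeed monic, as $c_k=1$) polynomial of degree $k$.

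Finally I would evaluate at a zero. If $\rho\in\mathfrak{F}$ satisfies $E_{12k}(\rho)=0$ for some $k\geq 1$, then $\Delta(\rho)\neq 0$, so dividing the identity $E_{12k}(\rho)=\Delta(\rho)^{k}P_k(j(\rho))$ by $\Delta(\rho)^{k}$ gives $P_k(j(\rho))=0$. Thus $j(\rho)$ is a root of the nonzero rational polynomial $P_k$, and is therefore algebraic. I do not expect a serious obstacle here; the only step requiring genuine care is the rationality of the coefficients $c_m$, which rests on the rationality of the Fourier coefficients of $E_{12k}$ together with the integrality and unitriangularity of the $q$-expansions of $\Delta$ and the powers of $j$.
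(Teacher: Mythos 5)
Your proof is correct, and it follows the same basic skeleton as the paper's --- expand $E_{12k}$ in a triangular monomial basis of $\mathcal{M}_{12k}$ built from $\Delta$ and a weight-$12$ block, prove the coefficients are rational by matching $q$-expansions, then evaluate at $\rho$ where $\Delta(\rho)\neq 0$ --- but your choice of basis makes the route genuinely cleaner. The paper uses the monomials $E_{12}^{n-l}\Delta^{l}$ (justified by the dimension formula $\dim_{\CC}\mathcal{M}_{12n}=n+1$ plus linear independence), which yields first that $E_{12}(\rho)/\Delta(\rho)$ is algebraic, and then needs a separate corollary invoking the identity $E_{12}(z)/\Delta(z)-j(z)=-432000/691$ to transfer algebraicity to $j(\rho)$. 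You instead work with $E_4^{3m}\Delta^{k-m}$, i.e.\ directly with powers of $j$, justified by the structure theorem for modular functions (holomorphic on the upper half-plane, pole of order at most $k$ at the cusp, hence a degree-$\leq k$ polynomial in $j$); this produces the rational polynomial $P_k$ with $P_k(j(\rho))=0$ in one stroke and eliminates the conversion step entirely. What the paper's version buys is self-containedness at a more elementary level (pure dimension counting, no appeal to the theory of modular functions, and an inductive constant-term algorithm for the $a_l$); what yours buys is directness, a monic polynomial for $j(\rho)$ of explicit degree $k$, and no need for the $691$-identity. Both rationality arguments are sound: your unitriangularity-of-principal-parts argument and the paper's induction on constant terms after dividing by powers of $\Delta$ are the same mechanism viewed from opposite ends of the expansion.
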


 For a stronger result, which says that $j(\rho)$ is algebraic for any zero $\rho$ of a meromorphic modular form $f=\sum_{n=h}^{\infty}a_nq^n$ for $SL_2(\ZZ)$ with $a_n(h)=1$ for which all coefficients lie in a number field, see Corollary 2 of the paper by 
 Bruinier, Kohnen and Ono \cite{bruinier2004arithmetic}.

Fourier coefficients of a normalized Hecke eigenform are all algebraic integers. As for Galois symmetry between Fourier coefficients of normalized eigenforms in the space $\mathcal{S}_k$ of cusp forms for $SL_2(\ZZ)$ of weight $k\geq 12$ , Maeda (see \cite{Maeda} Conjecture 1.2 ) conjectured that: 

\vspace{.1in}
\begin{conjecture}
	The Hecke algebra over $\QQ$ of $\mathcal{S}_k$ is
	simple (that is, a single number field) whose Galois closure over $\QQ$ has Galois
	group isomorphic to the symmetric group $\mathfrak{S}_n$, where $n = \dim \mathcal{S}_k$.
\end{conjecture}

Maeda's conjecture can be used to study polynomial identities between Hecke eigenforms. See J.B. Conrey and D.W. Farmer\cite{Farmer} and  J. Beyerl and K. James  and H. Xue\cite{MR3119178} for some previous work. In this paper, we prove the following:

\vspace{.1in}
\begin{theorem}\label{main1}
	
	Assuming Maeda's conjecture for $\mathcal{S}_k$ and $\mathcal{S}_{2k}$, if $f\in \mathcal{S}_k$, $g\in \mathcal{S}_{2k}$ are any nonzero cusp eigenforms, then the Petersson inner product $\langle f^2,g\rangle $
	is nonzero.
\end{theorem}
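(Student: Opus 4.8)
The plan is to pass from the inner product to the coefficients of $f^2$ in the Hecke eigenbasis of $\mathcal{S}_{2k}$ and then exploit the Galois action predicted by Maeda's conjecture. Since $f$ is a cusp form of weight $k$, its square $f^2$ is a cusp form of weight $2k$, so I would write $f^2=\sum_{j=1}^{n}c_j g_j$, where $g_1,\dots,g_n$ is the basis of normalized Hecke eigenforms of $\mathcal{S}_{2k}$ and $n=\dim\mathcal{S}_{2k}$. Distinct Hecke eigenforms are mutually orthogonal under the Petersson product, so $\langle f^2,g_j\rangle=c_j\langle g_j,g_j\rangle$; as $\langle g_j,g_j\rangle>0$, the theorem is equivalent to the assertion that $c_j\neq 0$ for every $j$. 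First I would record that the $c_j$ are algebraic: comparing the first $n$ Fourier coefficients on both sides gives a linear system whose coefficient matrix $\big(a_m(g_j)\big)$ is invertible (the first $n$ Fourier coefficients determine a weight $2k$ cusp form) and whose entries, together with the Fourier coefficients of $f^2$, lie in $\overline{\QQ}$; hence $c_j\in\overline{\QQ}$.

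Next I would make the Galois action explicit. For $\sigma\in\Gal(\overline{\QQ}/\QQ)$, applying $\sigma$ coefficientwise to $f^2=\sum_j c_j g_j$ and using that $\sigma$ is a ring homomorphism yields $(\sigma f)^2=\sum_j \sigma(c_j)\,(\sigma g_j)$. By Maeda for $\mathcal{S}_{2k}$, the Galois group permutes the $g_j$ through a surjection $\Gal(\overline{\QQ}/\QQ)\twoheadrightarrow\mathfrak{S}_n$, say $\sigma g_j=g_{\pi_\sigma(j)}$. Restricting to $\sigma\in\Gal(\overline{\QQ}/K_f)$, where $K_f$ is the Hecke field of $f$, we have $\sigma f=f$, so the identity becomes $c_{\pi_\sigma(j)}=\sigma(c_j)$. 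In particular $c_j=0$ if and only if $c_{\pi_\sigma(j)}=0$, so the vanishing set $Z=\{j:c_j=0\}$ is stable under the image $G_f\subseteq\mathfrak{S}_n$ of $\Gal(\overline{\QQ}/K_f)$. Since $f^2\neq 0$ we have $Z\neq\{1,\dots,n\}$, so it suffices to prove that $G_f$ acts transitively on $\{1,\dots,n\}$.

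This transitivity is the crux, and it is where Maeda for the smaller weight $\mathcal{S}_k$ enters. The group $G_f$ is the image of the index-$m$ subgroup $\Gal(\overline{\QQ}/K_f)$ (with $m=[K_f:\QQ]=\dim\mathcal{S}_k$ by Maeda for $\mathcal{S}_k$) under the surjection onto $\mathfrak{S}_n$; hence $[\mathfrak{S}_n:G_f]$ divides $m$. Because dimensions of cusp form spaces grow, one has $m=\dim\mathcal{S}_k<\dim\mathcal{S}_{2k}=n$, so $[\mathfrak{S}_n:G_f]<n$. For $n\geq 5$ the only subgroups of $\mathfrak{S}_n$ of index less than $n$ are $\mathfrak{S}_n$ and $A_n$, both of which are transitive; this forces $G_f$ transitive and hence $Z=\emptyset$. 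The remaining low-weight cases are handled directly: if $m\le 2$ then $G_f$ has index at most $2$ in $\mathfrak{S}_n$, so $G_f\supseteq A_n$ is transitive for $n\ge 2$, and the finitely many pairs $(k,2k)$ with $n\le 4$ can be checked individually.

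The hard part is precisely the transitivity step: a priori the Galois actions on $\mathcal{S}_k$ and $\mathcal{S}_{2k}$ could be correlated, which would shrink $G_f$ and might make $Z$ a nontrivial invariant subset. The device that dissolves this difficulty is the numerical bound $[\mathfrak{S}_n:G_f]\mid m<n$ together with the classical fact that proper subgroups of $\mathfrak{S}_n$ (for $n\ge5$) of index below $n$ are just $A_n$; this renders the precise nature of any correlation irrelevant. In writing this up I would take care to justify $m<n$ uniformly from the standard dimension formula for $\mathcal{S}_k$ and to dispose of the sporadic small weights by hand.
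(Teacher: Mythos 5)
Your proposal is correct, and its skeleton matches the paper's: expand $f^2=\sum_j c_jg_j$ in the eigenbasis, reduce $\langle f^2,g_j\rangle\neq 0$ to $c_j\neq 0$ via orthogonality, prove the $c_j$ are algebraic by inverting the Fourier-coefficient matrix (your parenthetical justification --- that the first $n$ coefficients determine a weight $2k$ cusp form --- is exactly the paper's valence-formula lemma), and then show that a Galois group fixing $f^2$ permutes the $g_j$ transitively. However, your proof of the crucial transitivity step is genuinely different. The paper forms the composite $F_1F_2$ of the two Hecke fields, writes $\Gal(F_1F_2/\QQ)$ as a fiber product over $F_1\cap F_2$, notes that $\Gal(F_2/(F_1\cap F_2))$ is \emph{normal} in $\Gal(F_2/\QQ)\cong\mathfrak{S}_{d_2}$, and uses the classification of normal subgroups of $\mathfrak{S}_{d_2}$ (valid only for $d_2\geq 5$) to conclude that $F_1\cap F_2$ is $\QQ$ or quadratic, so the stabilizer of $f^2$ contains a copy of $A_{d_2}$ or $\mathfrak{S}_{d_2}$; for $d_2\leq 4$ that method breaks down ($\mathfrak{S}_4$ has the Klein four-group as an extra normal subgroup), and the paper falls back on Sage computations of field discriminants plus a ramification/Minkowski argument to force $F_1\cap F_2=\QQ$ when $(k,2k)=(24,48),(28,56)$. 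You instead bound an index: the stabilizer contains $G_f$, the image of $\Gal(\overline{\QQ}/K_f)$ under the surjection onto $\mathfrak{S}_n$, so $[\mathfrak{S}_n:G_f]$ divides $[K_f:\QQ]=\dim\mathcal{S}_k<\dim\mathcal{S}_{2k}=n$, and for $n\geq 5$ the only subgroups of index $<n$ are $A_n$ and $\mathfrak{S}_n$, both transitive. This is uniform and computation-free: in the low-weight cases one has $\dim\mathcal{S}_k\leq 2$, so $G_f$ has index at most $2$ and contains $A_n$, eliminating the need for the paper's Table 2 entirely. (One micro-correction to your write-up: $A_n$ is transitive only for $n\geq 3$; when $n=2$ the only cases are $k=12,16$, where $\dim\mathcal{S}_k=1$ forces $G_f=\mathfrak{S}_2$, so nothing is lost.) Your route also quietly weakens the hypothesis: since $[K_f:\QQ]\leq\dim\mathcal{S}_k$ holds unconditionally, only Maeda's conjecture for $\mathcal{S}_{2k}$ is really needed, whereas the paper's fiber-product analysis uses Maeda on both spaces but in exchange yields explicit structural information about $F_1\cap F_2$ and $\Gal(F_1F_2/\QQ)$.
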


Theorem \ref{main1}  says that, assuming Maeda's conjecture, the square of a cusp eigenform for $SL_2(\ZZ)$ is "unbiased", i.e., it does not lie in the subspace spanned by a proper subset of an eigenbasis. The proof of Theorem \ref{main1} is presented in Section 4. 

%We briefly summarize the idea of the proof of Theorem \ref{main1}. One defines a number field $K$ by adjoining $\QQ$ all the Fourier coefficients of all normalize cusp forms of weight $k$  and $2k$. $K$ is Galois and the Galois group $\Gal(K/\QQ)$ acts on the set $B$ of normalized cusp eigenforms of weight $2k$ by conjugation of the Fourier coefficients. Assuming Maeda's conjecture, one can show that the stabilizer of $f^2$ acts transitively on $B$. Then $\langle f^2,g\rangle=0$ would implies that $f^2$ is orthogonal to the space spanned by all the cusp eigenforms of weight $2k$, which gives a contradiction $f^2=0$. One can also obtain that identities between cusp eigenforms for $SL_2(\ZZ)$ of the form $X^2=\sum_i{a_i}Y_i$ are all forced by dimension considerations by theorem \ref{main1}.

%In section 5, we give a concrete example where $\langle f^2,g\rangle=0$, where $f$ and $g$ are newforms for the congruence subgroup $\Gamma_0(2)$ of weight $10$ and $20$. This examples shows a major difference between modular forms of the full modular group and its congruence subgroups. 

\section{Preliminaries and Convention} 
A standard reference for this section is \cite{diamond2006first}.
Let $f$ be a holomorphic function on the upper half plane $\mathcal{H}=\{z\in\CC| \im(z)>0\}$. Let $\gamma=\left(\begin{smallmatrix}a& b\\
 c & d\end{smallmatrix}\right)\in  SL_2(\ZZ)$. Then define the slash operator $[\gamma]_k$ of weight $k$ as $$f|_k{[\gamma]}(z):=(cz+d)^{-k}f(\gamma z).$$ If $\Gamma$ is a subgroup of $ SL_2(\ZZ)$, we say that $f$ is a modular form of weight $k$ with respect to $\Gamma$ if $f|_k[\gamma](z)=f(z)$ for all $z\in\mathcal{H}$ and all $\gamma\in\Gamma$. 
Define

$$\Gamma_1(N):=\left\{\begin{pmatrix}
a& b\\
 c & d \end{pmatrix}\in SL_2(\ZZ)\bigg| \begin{pmatrix}
a& b\\
 c & d \end{pmatrix}\equiv \begin{pmatrix}
1& * \\
 0 & 1 \end{pmatrix} \mod N \right \},$$

$$\Gamma_0(N):=\left\{\begin{pmatrix}
a& b\\
c & d \end{pmatrix}\in SL_2(\ZZ)\bigg|c\equiv 0\mod N \right\}.$$

Denote by $\mathcal{M}_k(\Gamma_1(N))$ the vector space of weight $k$ modular forms with respect to $\Gamma_1(N)$. 

Let $\chi$ be a Dirichlet character mod $N$, i.e., a homomorphism $\chi: (\ZZ/N\ZZ)^\times\rt \CC^\times$. By convention one extends the definition of $\chi$ to $\ZZ$ by defining $\chi (m)=0$ for $\gcd(m,N)> 1$. In particular the trivial character modulo $N$ extends to the function 

$$\mathbb{1}_N(n)=\begin{cases}1 & \text{if } \gcd(n,N)=1,\\
0 &\text{if } \gcd(n,N)>1.\end{cases}$$

Take $n\in (\ZZ/N\ZZ)^\times$, and define the diamond operator 
$$\langle n\rangle: \mathcal{M}_k(\Gamma_1(N))\rt \mathcal{M}_k(\Gamma_1(N))$$

 as $\langle n\rangle f:=f|_k[\gamma]$ for any $\gamma=\left(\begin{smallmatrix}a&b\\c&d\end{smallmatrix}\right)\in\Gamma_0(N)$ with $d\equiv n \mod N$. Define the $\chi$ eigenspace $$\mathcal{M}_k(N,\chi):=\{f\in \mathcal{M}_k(\Gamma_1(N)) |\langle n\rangle f=\chi(n)f ,\forall  n\in (\ZZ/N\ZZ)^\times\}.$$ Then one has the following  decomposion:

$$\mathcal{M}_k(\Gamma_1(N))=\bigoplus_{\chi}\mathcal{M}_k(N,\chi),$$ 
where $\chi$ runs through all Dirichlet characters mod $N$ such that $\chi(-1)=(-1)^k$. See Section 5.2 of \cite{diamond2006first} for details.
 
 Let $f\in \mathcal{M}_{k}(\Gamma_1(N))$, since $\left(\begin{smallmatrix}
 1&1\\
 0&1
 \end{smallmatrix}\right)\in \Gamma_1(N)$, one has a Fourier expansion: 
$$f(z)=\sum_{n=0}^{\infty}a_n(f)q^n, \quad q=e^{2\pi i z}.$$
If $a_0=0$ in the Fourier expansion of $f|_k[\alpha]$ for all $\alpha\in SL_2(\ZZ)$, then $f$ is called a cusp form. For a modular form $f\in \mathcal{M}_k(\Gamma_1(N))$ the Petersson inner product of $f$ with $g\in \mathcal{S}_k(\Gamma_1(N))$ is defined by the formula

$$\langle f,g\rangle =\int_{\mathcal{H}/\Gamma_1(N)}\overline{f(z)}g(z)y^{k-2}dxdy,$$
where $z=x+iy$ and $\mathcal{H}/\Gamma_1(N)$ is the quotient space. See \cite{diamond2006first} Section 5.4.  

We define the space $\mathcal{E}_k(\Gamma_1(N))$ of weight $k$ Eisenstein series with respect to $\Gamma_1(N)$ as the orthogonal complement of $\mathcal{S}_k(\Gamma_1(N))$ with respect to the Petersson inner product, i.e., one has the following orthogonal decomposition:

$$\mathcal{M}_k(\Gamma_1(N))=\mathcal{S}_k(\Gamma_1(N))\oplus \mathcal{E}_k(\Gamma_1(N)).$$

We also have decomposition of the cusp space and the space of Eisenstein series in terms of $\chi$ eigenspaces:

$$\mathcal{S}_k(\Gamma_1(N))=\bigoplus_{\chi}\mathcal{S}_k(N,\chi),$$

$$\mathcal{E}_k(\Gamma_1(N))=\bigoplus_{\chi}\mathcal{E}_k(N,\chi).$$  
See Section 5.11 of \cite{diamond2006first}.

For two Dirichlet characters $\psi$ modulo $u$ and $\varphi$ modulo $v$ such that $uv|N$ and $(\psi\varphi)(-1)=(-1)^k$ define 
\begin{equation}\label{eisen}
E_k^{\psi,\varphi}(z)=\delta(\psi)L(1-k,\varphi)+2\sum_{n=1}^{\infty}\sigma_{k-1}^{\psi,\varphi}(n)q^n,
\end{equation}

where $\delta(\psi)$ is 1 if $\psi=\mathbb{1}_1$ and 0 otherwise, $$L(1-k,\varphi)=-\frac{B_{k,\varphi}}{k}$$ is the special value of the Dirichlet $L$ function at $1-k$,
$B_{k,\varphi}$ is the $k$th generalized Bernoulli number defined by the equality

$$\sum_{k\geq 0}B_{k,\varphi}\frac{t^k}{k!}:=\sum_{a=0}^{N-1}\frac{\varphi(a)te^{at}}{e^{Nt}-1},$$

 and the generalized power sum in the Fourier coefficient is 

$$\sigma_{k-1}^{\psi,\varphi}(n)=\sum_{\substack{m|n\\ m>0}}\psi(n/m)\varphi(m)m^{k-1}.$$

See page 129 in \cite{diamond2006first}.

Let $A_{N,k}$ be the set of triples of $(\psi,\varphi,t)$ such that $\psi$ and $\varphi$ are primitive Dirichlet characters modulo $u$ and $v$ with $(\psi\varphi)(-1)=(-1)^k$ and $t\in \ZZ_{+}$ such that $tuv|N$. For any triple $(\psi,\varphi,t)\in A_{N,k}$, define 
\begin{equation}\label{eisen2}
E_{k}^{\psi,\varphi,t}(z)=E_{k}^{\psi,\varphi}(tz).
\end{equation}
Let $N\in \ZZ_{+}$ and let $k\geq 3$. The set

$$\{E_{k}^{\psi,\varphi,t}:(\psi,\varphi,t)\in A_{N,k}\}$$ 

gives a basis for $\mathcal{E}_k(\Gamma_1(N))$ and the set 

$$\{E_{k}^{\psi,\varphi,t}:(\psi,\varphi,t)\in A_{N,k},\psi\varphi=\chi\}$$ gives a basis for $\mathcal{E}_{k}(N,\chi)$ (see Theorem 4.5.2 in \cite{diamond2006first}).

For $k=2$ and $k=1$ an explicit basis can also be obtained but is more technical. Therefore we assume $k\geq 3$ when dealing with Eisenstein series.  For details about the remaining two cases see Chapter 4 in \cite{diamond2006first}.

Now we define Hecke operators. For a reference see page 305 in \cite{MR2153714} . 
Define $\Delta_m(N)$ to be the set

$$\left\{\gamma=\begin{pmatrix}
a&b\\
c&d\end{pmatrix}\in M_2(\ZZ)| c\equiv 0 \mod N,\det\gamma=m\right\}.$$
Then $\Delta_m(N)$ is invariant under right multiplication by elements of $\Gamma_0(N)$. One checks that the set
$$\mathscr{S}=\left\{\begin{pmatrix}
a&b\\0&d\end{pmatrix}\in M_2(\ZZ)| a,d>0,ad=m, b=0,\dots,d-1\right\}$$
forms a complete system of representatives for the action of $\Gamma_0(N)$. One defines the Hecke operator $T_m\in \End(\mathcal{M}_k(N,\chi))$ as:

$$f\mapsto f|_k T_m=m^{k/2-1}\sum_{\sigma}\chi(a_{\sigma})f|_k\sigma,$$

where $\sigma=\left(\begin{smallmatrix}a_\sigma & b_\sigma\\c_\sigma & d_\sigma\end{smallmatrix}\right)\in \mathscr{S}$ and $\gcd(m,N)=1$.

One can compute the action of $T_m$ explicitly in terms of the Fourier expansion:

$$f|_k T_m =a_0\sum_{m_1|m}\chi(m_1)m_1^{k-1}+\sum_{n=1}^{\infty}\sum_{m_1|(m,n)}\chi(m_1)m_1^{k-1}a_{mn/m_1^2}q^n.$$

The multiplication rule for weight $k$ operators $T_m$ is as follows:

\begin{equation}\label{multi}
T_mT_n=\sum_{m_1|(m,n)}\chi(m_1)m_1^{k-1}T_{mn/m_1^2}. 
\end{equation}

An eigenform $f\in　\mathcal{M}_k(N,\chi)$ is defined as the simultaneous eigenfunction of all $T_m$ with $\gcd(m,N)=1$.    

 If $f\in　\mathcal{M}_k(N,\chi)$ is an eigenform for the Hecke operators with character $\chi$, i.e., if 
\begin{equation}\label{eigenf}
f|_k T_m =\lambda_f(m)f \quad (\gcd(m,N)=1),
\end{equation}

 then equation (\ref{multi}) implies that 
 
 $$\lambda_f(m)\lambda_f(n)=\sum_{m_1|(m,n)}\chi(m_1)m_1^{k-1}\lambda_f(mn/m_1^2).$$

Comparing the Fourier coefficients in equation (\ref{eigenf}), one sees that
\begin{equation}\label{FC}
a_0\sum_{m_1|m}\chi(m_1)m_1^{k-1}=\lambda_f(m)a_0,
\end{equation}

$$\sum_{m_1|(m,n)}\chi(m_1)m_1^{k-1}a_{mn/m_1^2}=\lambda_f(m)a_n.$$
For $n=1$ one then has 

\begin{equation}\label{normalize}
a_m=\lambda_f(m)a_1.
\end{equation}

Therefore if $a_1\neq 0$ and we normalize $f$ such that $a_1=1$, then $a_m=\lambda_m(f)$.

Eisenstein series are eigenforms. Let $E_{k}^{\psi,\varphi,t}$ be the Eisenstein series defined by equation  (\ref{eisen2}). Then 
$$T_p E_k^{\psi,\varphi,t}=(\psi(p)+\varphi(p)p^{k-1})E_k^{\psi,\varphi,t}$$

if  $uv=N$ or $p\nmid N$. See Proposition 5.2.3 in \cite{diamond2006first}.

Let $M|N$ and $d|N/M$. Let $\alpha_d=\left(\begin{smallmatrix}
d&0\\
0&1
\end{smallmatrix}\right)$. Then  for any $f: \mathcal{H}\rightarrow\CC$ 
$$(f|_k[\alpha_d])(z)=d^{k-1}f(dz)$$
defines a linear map $[\alpha_d]$ taking $\mathcal{S}_{k}(\Gamma_1(M))$ to $\mathcal{S}_{k}(\Gamma_1(N))$.   

\vspace{0.1in}
\begin{definition}
Let $d$ be a divisor of $N$ and let $i_d$ be the map 
$$i_d:\quad (\mathcal{S}_{k}(\Gamma_1(Nd^{-1})))^2\rightarrow \mathcal{S}_k(\Gamma_1(N))$$ given by 
$$(f,g)\mapsto f+g|_k[\alpha_d].$$
The subspace of oldforms of level $N$ is defined by $$\mathcal{S}_k(\Gamma_1(N)):= \sum_{{p|N} \atop{p \text{ prime}}}i_p((\mathcal{S}_k(\Gamma_1(Np^{-1})))^2)$$

and the subspace of newforms at level $N$ is the orthogonal complement with respect to the Petersson inner product,

$$\mathcal{S}_k(\Gamma_1(N))^{\text{new}}=(\mathcal{S}_k(\Gamma_1(N))^{\text{old}})^{\bot}.$$
\end{definition}  

See Section 5.6 in \cite{diamond2006first}.

Let $\iota_d$ be the map $(\iota_df)(z)=f(dz)$. The main lemma in the theory of newforms due to Atkin and Lehner \cite{MR0268123} is the following:

\vspace{0.1in}
\begin{theorem}\label{main lemma} (Thm. 5.7.1 in \cite{diamond2006first})
If $f\in S_k(\Gamma_1(N))$ has Fourier expansion $f(z)=\sum a_n(f)q^n$ with $a_n(f)=0$ whenever $\gcd(n,N)=1$, then $f$ takes the form $f=\sum_{p|N}\iota_pf_p$ with each $f_p\in\mathcal{S}_k(\Gamma_1(N/p))$.
\end{theorem}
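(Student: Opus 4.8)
The plan is to build $f$ out of its ``$p$-primary parts'' for the primes $p\mid N$, using two Hecke-type operators at each such prime and reducing everything to a single descent lemma. For $p\mid N$ write $V_p=\iota_p$, so that $(V_ph)(z)=h(pz)$ has $q$-expansion $\sum_n a_n(h)q^{pn}$, and let $U_p$ act on $q$-expansions by $a_n(U_pg)=a_{pn}(g)$. Then $U_pV_p=\mathrm{id}$ on $q$-expansions, both operators preserve $\mathcal S_k(\Gamma_1(N))$, and $P_p:=V_pU_p$ is the projection onto the part of a $q$-expansion supported on multiples of $p$, while $1-P_p$ projects onto the part supported on integers prime to $p$. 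It is convenient (though not essential) to first split $f=\sum_\chi f_\chi$ along $\mathcal S_k(\Gamma_1(N))=\bigoplus_\chi\mathcal S_k(N,\chi)$ and treat each nebentypus component separately; the hypothesis $a_n(f)=0$ for $\gcd(n,N)=1$ is inherited by every $f_\chi$, since the diamond operators commute with $U_p$ and $V_p$, hence with the projections $P_p$.

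Next I would record the combinatorial skeleton. Let $p_1,\dots,p_r$ be the distinct primes dividing $N$. The operators $P_{p_i}$ are simultaneously diagonal in the $q$-expansion basis, so they commute, and $1-\prod_{i=1}^r(1-P_{p_i})$ is the projection onto $q$-expansions supported on $\bigcup_i\{\,p_i\mid n\,\}$. The hypothesis says exactly that $f$ lies in this image, so by inclusion--exclusion $f=\sum_{\emptyset\neq T\subseteq\{1,\dots,r\}}(-1)^{|T|+1}\,V_{p_T}U_{p_T}f$, where $p_T=\prod_{i\in T}p_i$ and $V_{p_T},U_{p_T}$ are the evident products. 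Each summand $V_{p_T}U_{p_T}f$ is supported on multiples of $p_T$, hence of any single $p_i$ with $i\in T$. Since a form of level $N/p_i$ is the required shape and contributions to a common prime can be added, it suffices to rewrite each summand as $\iota_{p_i}$ of a form of level $N/p_i$ for one $i\in T$ and then regroup by that prime, yielding $f=\sum_{p\mid N}\iota_pf_p$ once the descent step below is available.

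The crux is therefore the descent lemma: if $g\in\mathcal S_k(\Gamma_1(N))$ has $q$-expansion supported on multiples of $p$ (equivalently $g=V_pU_pg$), then $U_pg\in\mathcal S_k(\Gamma_1(N/p))$, so that $g=\iota_p(U_pg)$ with $U_pg$ of level $N/p$; applied with $p=p_i$ to $g=V_{p_T}U_{p_T}f$ this rewrites each summand as $\iota_{p_i}(U_{p_i}g)$ in a single step. I expect this to be the main obstacle, and it is genuinely not a formal consequence of manipulating $q$-expansions: a sub-sum of the Fourier coefficients of a modular form is not in general modular, and $U_p$ alone only returns a form of level $N$. Writing $\beta=\left(\begin{smallmatrix}p&0\\0&1\end{smallmatrix}\right)$ and $h=U_pg$ one has $h(z)=g(z/p)$, and for $\gamma=\left(\begin{smallmatrix}a&b\\c&d\end{smallmatrix}\right)\in\Gamma_1(N/p)$ the naive conjugate $\beta^{-1}\gamma\beta=\left(\begin{smallmatrix}a&b/p\\cp&d\end{smallmatrix}\right)$ need not even be integral, so modularity of $h$ under all of $\Gamma_1(N/p)$ cannot follow from that of $g$ alone. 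The extra invariance must instead be extracted from the support condition $g=V_pU_pg$ together with the $\Gamma_1(N)$-modularity of $g$, via the double-coset computation for $U_p$ and $V_p$ and the adjoint (trace) relation between the degeneracy maps $i_p$; here the reduction to a fixed nebentypus $\chi$ is what makes the relevant coset sums collapse cleanly. This is exactly the content of Atkin--Lehner theory, and once the descent lemma is in hand the inclusion--exclusion skeleton assembles the stated decomposition.
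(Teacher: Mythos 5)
Your proposal has a genuine gap, and it sits exactly where you locate it: the ``descent lemma'' is not an auxiliary lemma but is the theorem itself, and you never prove it. Note first that the paper does not prove this statement at all --- it quotes it as Theorem 5.7.1 of Diamond--Shurman, i.e.\ as the Main Lemma of Atkin--Lehner newform theory --- so a proof attempt here must actually supply the Atkin--Lehner argument. Your reduction does not: for $N=p^e$ a prime power, the hypothesis $a_n(f)=0$ for $\gcd(n,N)=1$ says precisely that $f$ is supported on multiples of $p$, and the conclusion $f=\iota_pf_p$ with $f_p\in\mathcal{S}_k(\Gamma_1(N/p))$ says precisely that $U_pf$ descends to level $N/p$; that is, your descent lemma \emph{is} the statement being proved. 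Ending with ``this is exactly the content of Atkin--Lehner theory'' therefore makes the argument circular: all of the group-theoretic content (the double-coset manipulations, the extra invariance that the support condition is supposed to force) is deferred to the very theorem under proof, and no step of your proposal supplies it.

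The combinatorial skeleton also contains a concrete error that would block assembly even if the descent lemma were granted exactly as you state it. Contrary to your claim, $V_p$ does not preserve $\mathcal{S}_k(\Gamma_1(N))$: like the paper's map $[\alpha_d]$, it carries level $M$ to level $pM$, so $P_p=V_pU_p$ maps $\mathcal{S}_k(\Gamma_1(N))$ into $\mathcal{S}_k(\Gamma_1(Np))$, and in general the part of a level-$N$ form supported on coefficients divisible by $p$ is \emph{not} of level $N$ --- this is the same phenomenon you correctly flag when you observe that a sub-sum of a $q$-expansion need not be modular at the same level. Consequently each inclusion--exclusion piece $V_{p_T}U_{p_T}f$ is only known to lie in $\mathcal{S}_k(\Gamma_1(Np_T))$, while your descent lemma takes inputs of level $N$, so it cannot be applied to these pieces; and if you instead invoke a level-$Np_T$ version of it, you descend only to level $Np_T/p_i\geq N$, never to the required level $N/p_i$. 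So the skeleton does not produce forms $f_p\in\mathcal{S}_k(\Gamma_1(N/p))$, and the proof does not close. A genuine proof must show that the support hypothesis, combined with $\Gamma_1(N)$-invariance (after restriction to a nebentypus eigenspace), forces invariance of the relevant averages $\sum_{j}f|_k\left(\begin{smallmatrix}1&j/p\\0&1\end{smallmatrix}\right)$ under a strictly larger group; that level-lowering step is where all the work of Atkin--Lehner and of Diamond--Shurman's Section 5.7 lies, and it is absent here.
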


\vspace{0.1in}
\begin{definition} A newform is a normalized eigenform in $\mathcal{S}_k(\Gamma_1(N))^{\text{new}}$.
\end{definition}

By Theorem \ref{main lemma}, if $f\in \mathcal{M}_k(N,\chi)$ is a Hecke eigenform with $a_1(f)=0$, then $f$ is not a newform.
\section{ Identities between Hecke eigenforms}
 
In this section we study solutions to equation (\ref{polyid}) with $f,g,h$ being newforms with respect to $\Gamma_1(N)$. 
The first observation is that  if $a_f(0)=a_g(0)=0$ then $a_h(1)=0$, which implies $h=0$ by equation (\ref{normalize}), hence without loss of generality we can assume that $a_g(0)\neq 0$.  We normalize $g$ such that $a_g(0)=1$. In the following we talk about two cases according to whether or not $a_f(0)=0$. 
\vspace{0.1in}
\begin{lemma}\label{samech}
Assume that $g,f,h$ satisfies equation (\ref{polyid}) with $f$ and $g$ linearly independent over $\CC$, and let $\chi_g,\chi_f,\chi_h$ be the Dirichlet characters associated with $g,f,h$ respectively. Then $\chi_h=\chi_g^2=\chi_f^2=\chi_f\chi_g$.    
\end{lemma}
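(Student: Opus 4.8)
The plan is to exploit the multiplicativity of the slash operator together with the fact that $f^2$, $fg$, $g^2$ are linearly independent whenever $f$ and $g$ are. First I would record the elementary identity that for $\gamma=\left(\begin{smallmatrix}a&b\\c&d\end{smallmatrix}\right)\in SL_2(\ZZ)$ and holomorphic functions $p,q$ of weights $k_1,k_2$, one has $(pq)|_{k_1+k_2}[\gamma]=(p|_{k_1}[\gamma])(q|_{k_2}[\gamma])$, which follows immediately from the definition $p|_k[\gamma](z)=(cz+d)^{-k}p(\gamma z)$ since the automorphy factors multiply. In particular, for the diamond operator at weight $2k$ acting on products of weight-$k$ forms, $\langle n\rangle(pq)=(\langle n\rangle p)(\langle n\rangle q)$ for every $n\in(\ZZ/N\ZZ)^\times$.

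Next I would apply $\langle n\rangle$ to both sides of (\ref{polyid}), remembering that $h$ has weight $2k$ while $f,g$ have weight $k$. Using that $f,g,h$ are eigenforms with associated characters $\chi_f,\chi_g,\chi_h$, the left side becomes $\chi_h(n)h$, while multiplicativity turns the right side into $a\chi_f(n)^2 f^2+b\chi_f(n)\chi_g(n)fg+\chi_g(n)^2 g^2$. Substituting $h=af^2+bfg+g^2$ on the left and subtracting yields
\begin{equation*}
a\big(\chi_h(n)-\chi_f(n)^2\big)f^2+b\big(\chi_h(n)-\chi_f(n)\chi_g(n)\big)fg+\big(\chi_h(n)-\chi_g(n)^2\big)g^2=0.
\end{equation*}

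The remaining, and really the only substantive, step is to argue that $f^2$, $fg$, $g^2$ are linearly independent over $\CC$. I would prove this by contradiction: a nontrivial relation $\alpha f^2+\beta fg+\gamma g^2=0$ with $g\neq 0$ forces the meromorphic function $f/g$ to satisfy a fixed polynomial equation of degree $\leq 2$, hence to be constant, contradicting the linear independence of $f$ and $g$; the degenerate cases $\alpha=0$ or $\gamma=0$ are handled directly and give the same contradiction. Granting this, every coefficient in the displayed relation must vanish, and since $a,b\neq 0$ we obtain $\chi_h(n)=\chi_f(n)^2=\chi_f(n)\chi_g(n)=\chi_g(n)^2$ for all $n\in(\ZZ/N\ZZ)^\times$, that is, $\chi_h=\chi_f^2=\chi_f\chi_g=\chi_g^2$. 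I expect the linear-independence step to be the main (though mild) obstacle, since the coefficient comparison that forces the character identities is valid only once independence of the three products is established.
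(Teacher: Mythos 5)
Your proof is correct and follows essentially the same route as the paper: apply the diamond operator $\langle n\rangle$ to the identity, use multiplicativity of the slash operator to distribute it over products, substitute $h=af^2+bfg+g^2$ back in, and invoke linear independence of $f^2, fg, g^2$ to force every coefficient to vanish, whence $a,b\neq 0$ gives $\chi_h=\chi_f^2=\chi_f\chi_g=\chi_g^2$. The only difference is in the sub-step establishing that $f^2,fg,g^2$ are linearly independent: the paper cites the Wronskian, while you argue that a nontrivial relation would make $f/g$ satisfy a constant-coefficient polynomial and hence be constant, contradicting the independence of $f$ and $g$; both justifications are valid.
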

\begin{proof}
Take the diamond operator $\langle d\rangle$ and act on the identity $h=af^2+bfg+g^2$ to obtain $$\chi_h(d)h=a\chi^2_f(d)f^2+b\chi_f(d)\chi_g(d)fg+\chi^2_g(d)g^2.$$ 

We then substitute (\ref{polyid}) for $h$ to obtain
 \begin{equation}\label{Nonconstant}
 a(\chi_h(d)-\chi_f^2(d))f^2+b(\chi_h(d)-\chi_g(d)\chi_f(d))fg+(\chi_h(d)-\chi^2_g(d))g^2=0.
 \end{equation}
  
  Note that since $f$ and $g$ are linearly independent over $\CC$, so are $f^2,fg$ and $g^2$, which one can easily prove by considering the Wronksian. Then equation (\ref{Nonconstant}) implies that 
   $$a(\chi_h(d)-\chi_f^2(d))=b(\chi_h(d)-\chi_g(d)\chi_f(d))=\chi_h(d)-\chi^2_g(d)=0.$$
   Since $a,b\neq 0$, one finds
   $$\chi_h(d)=\chi_f^2(d)=\chi_f(d)\chi_g(d)=\chi_g^2(d).$$ Since $d$ is arbitrary we are done.
\end{proof}

\begin{proposition}\label{SameChar}
Suppose that the triple of newforms $(f,g,h)$ is a solution to (\ref{polyid}), $a_f(0)\neq 0$ and $a_g(0)\neq 0$. Then  $f=g$. 

\end{proposition}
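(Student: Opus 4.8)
The plan is to dispose of the linearly dependent case at once and then run a structural argument in the linearly independent case. If $f$ and $g$ are linearly dependent, then, being nonzero normalized eigenforms, one is a scalar multiple of the other; comparing normalizing coefficients forces the scalar to be $1$, so $f=g$ and we are done. Hence I would assume $f\neq g$, equivalently that $f$ and $g$ are linearly independent over $\CC$, and aim for a contradiction. Under this assumption Lemma \ref{samech} applies and gives $\chi_f^2=\chi_f\chi_g$; since a Dirichlet character is nonvanishing and invertible on $(\ZZ/N\ZZ)^\times$, I may cancel $\chi_f$ to obtain $\chi_f=\chi_g=:\chi$.

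Next I would exploit the hypothesis $a_f(0)\neq 0\neq a_g(0)$ to pin down the shape of $f$ and $g$. A cusp form has vanishing constant term, so a normalized eigenform with nonzero constant term cannot be cuspidal and must be an Eisenstein series. By the explicit basis of $\mathcal{E}_k(\Gamma_1(N))$ recalled above, a normalized Eisenstein eigenform is a scalar multiple of some $E_k^{\psi,\varphi,t}$; since the $q$-expansion of $E_k^{\psi,\varphi,t}$ is supported on powers $q^{tn}$, the normalization condition that the first coefficient be nonzero forces $t=1$. Thus $f$ and $g$ are scalar multiples of $E_k^{\psi_f,\varphi_f}$ and $E_k^{\psi_g,\varphi_g}$ respectively. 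The constant term of $E_k^{\psi,\varphi}$ is $\delta(\psi)L(1-k,\varphi)$ by (\ref{eisen}), which is nonzero only when $\delta(\psi)=1$, i.e.\ when $\psi=\mathbb{1}_1$. Applying this to both forms yields $\psi_f=\psi_g=\mathbb{1}_1$.

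Finally I would combine these facts: the nebentypus of $E_k^{\mathbb{1}_1,\varphi}$ is $\mathbb{1}_1\varphi=\varphi$, so $\chi_f=\varphi_f$ and $\chi_g=\varphi_g$, and the equality $\chi_f=\chi_g$ from the first step forces $\varphi_f=\varphi_g$ (a primitive character inducing a given character mod $N$ is unique). Hence $f$ and $g$ are scalar multiples of one and the same Eisenstein series $E_k^{\mathbb{1}_1,\chi}$, contradicting their linear independence; therefore $f=g$. I expect the only delicate point to be the second step, where one must justify that a Hecke eigenform with nonzero constant term lies entirely in the Eisenstein subspace $\mathcal{E}_k(\Gamma_1(N))$ (so that the explicit basis and the constant-term formula genuinely apply) rather than having a mixed cuspidal/Eisenstein decomposition; the character cancellation and the final identification are then routine.
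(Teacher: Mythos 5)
Your handling of the linear-dependence case and the character cancellation are fine (there you are actually more careful than the paper, which applies Lemma \ref{samech} without remarking on its linear-independence hypothesis). But your classification step rests on a claim that is false as stated: that a normalized Eisenstein eigenform must be a scalar multiple of a single basis element $E_k^{\psi,\varphi,t}$. For fixed $(\psi,\varphi)$, all the forms $E_k^{\psi,\varphi,t}$ share the same Hecke eigenvalues, $T_p E_k^{\psi,\varphi,t}=(\psi(p)+\varphi(p)p^{k-1})E_k^{\psi,\varphi,t}$ for every $p\nmid N$; hence any combination $\sum_t c_t E_k^{\mathbb{1}_1,\varphi,t}$ with $c_1\neq 0$ is a simultaneous eigenform for all $T_m$ with $\gcd(m,N)=1$, has nonzero constant term and nonzero coefficient of $q$, and can be normalized---yet it is a multiple of no single $E_k^{\psi,\varphi,t}$. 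So normalization does not force $t=1$; it only forces $c_1\neq 0$. What eliminates the $t>1$ components is the hypothesis you never invoke in this step, namely that $f$ and $g$ are \emph{newforms}: each $E_k^{\psi,\varphi,t}$ with $tuv\mid N$ and $(t,uv)\neq (1,N)$ arises from the strictly smaller level $uv$ and is old. Supplying the missing input---that a new Eisenstein eigenform is exactly a multiple of some $E_k^{\psi,\varphi}$ with $\psi,\varphi$ primitive and $uv=N$---is precisely the multiplicity-one statement that the paper compresses into its appeal to equation (\ref{FC}): once $a_0\neq 0$, (\ref{FC}) forces $\lambda_f(m)=\sum_{m_1\mid m}\chi(m_1)m_1^{k-1}$, and the space of newforms with character $\chi$ and this eigenvalue system is one-dimensional, spanned by $E_k^{\mathbb{1}_1,\chi}$.

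The second gap is the one you flag yourself but leave open: that an eigenform with nonzero constant term lies entirely in $\mathcal{E}_k(\Gamma_1(N))$. This does require an argument, though a standard one: since the Petersson adjoint of $T_m$ ($\gcd(m,N)=1$) is $\langle m\rangle^{-1}T_m$, the Hecke operators preserve both $\mathcal{S}_k(\Gamma_1(N))$ and its orthogonal complement, so the cuspidal component $f_c$ of $f$ is itself an eigenform with the eigenvalues forced by (\ref{FC}); by (\ref{normalize}) its coefficients of index prime to $N$ equal $a_1(f_c)\sum_{m_1\mid m}\chi(m_1)m_1^{k-1}$, which for $k>2$ grows like $m^{k-1}$ and violates Hecke's elementary bound $a_m=O(m^{k/2})$ for cusp forms unless $a_1(f_c)=0$; in that case all these coefficients vanish, Theorem \ref{main lemma} exhibits $f_c$ as an oldform, and old cusp eigenforms have eigenvalues equal to coefficients of lower-level newforms, again too small---so $f_c=0$. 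In short, your route through the explicit Eisenstein basis is repairable, but both of its delicate points must be closed by the newform hypothesis together with the eigenvalue rigidity of (\ref{FC}), which is the single observation on which the paper's much shorter proof rests.
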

\begin{proof}
By Lemma \ref{samech}, $f$ and $g$ have the same character. Denote it by $\chi$. Note that the space of weight $k$ newforms with character $\chi$ with nonvanishing constant term is of dimension 1 with basis $E_k^{\triv_1,\chi}$ by  equation (\ref{FC}). Since $f$ and $g$ are normalized, we have $f=g$.
\end{proof}

\begin{remark}
If $f=g$, then  equation (\ref{polyid}) reduces to $h=fg$, which was classified   in\cite{johnson2013hecke}. 
\end{remark}
Now we focus on the case where $a_f(0)=0$. We have the following solutions forced by dimension considerations. 

\vspace{0.1in}
\begin{lemma}
Let $f,g\in \mathcal{M}_k(N,\chi)$ be two eigenforms that are algebraically independent, and assume that $\dim_\CC \mathcal{M}_{2k}(N,\chi^2)\leq 3$. Then  every eigenform $h\in  \mathcal{M}_{2k}(N,\chi^2)$ satisfies an identity of the form $h=af^2+bfg+cg^2$.

\end{lemma}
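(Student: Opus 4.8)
The plan is to exploit a simple dimension count. Consider the three elements $f^2,\ fg,\ g^2 \in \mathcal{M}_{2k}(N,\chi^2)$; since $f$ and $g$ are algebraically independent, these three are linearly independent over $\CC$ (in fact algebraic independence of $f,g$ immediately forces the three degree-two monomials to be linearly independent, as any nontrivial linear relation among $f^2,fg,g^2$ would be a nonzero homogeneous polynomial relation of degree two between $f$ and $g$). Thus the subspace $W := \operatorname{span}_\CC\{f^2,fg,g^2\}$ of $\mathcal{M}_{2k}(N,\chi^2)$ has dimension exactly $3$.

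Next I would use the hypothesis $\dim_\CC \mathcal{M}_{2k}(N,\chi^2)\leq 3$. Combined with $\dim W = 3$ and $W\subseteq \mathcal{M}_{2k}(N,\chi^2)$, this forces $W=\mathcal{M}_{2k}(N,\chi^2)$; in particular $\dim_\CC \mathcal{M}_{2k}(N,\chi^2)=3$. Hence every element of $\mathcal{M}_{2k}(N,\chi^2)$, and in particular any eigenform $h$ in that space, can be written as a $\CC$-linear combination
\begin{equation*}
h = a f^2 + b fg + c g^2
\end{equation*}
for some $a,b,c\in\CC$. This is exactly the asserted identity.

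The one point requiring a little care is the claim that algebraic independence of $f$ and $g$ yields linear independence of $f^2,fg,g^2$; this is the only genuine content, and I would dispatch it by noting that a nontrivial linear relation $\alpha f^2+\beta fg+\gamma g^2=0$ is precisely a nonzero homogeneous polynomial relation $P(f,g)=0$ of degree $2$, contradicting algebraic independence. Everything else is the elementary observation that a $3$-dimensional subspace of a space of dimension $\leq 3$ is the whole space. I expect no serious obstacle; the main thing to be careful about is simply confirming that $f^2,fg,g^2$ genuinely lie in $\mathcal{M}_{2k}(N,\chi^2)$, which follows since multiplication of modular forms adds weights and multiplies characters (so weight $k$ forms with character $\chi$ multiply to weight $2k$ forms with character $\chi^2$).
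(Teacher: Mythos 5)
Your proof is correct and is exactly the argument the paper intends: the paper states this lemma without proof, introducing it as a solution ``forced by dimension considerations,'' and your dimension count (algebraic independence of $f,g$ gives linear independence of $f^2, fg, g^2$, so these span the at-most-three-dimensional space $\mathcal{M}_{2k}(N,\chi^2)$) is precisely that omitted argument. Your care in verifying that $f^2,fg,g^2$ land in $\mathcal{M}_{2k}(N,\chi^2)$ and that algebraic independence rules out any nontrivial quadratic relation makes the write-up complete.
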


Recall that the dimension of $\mathcal{M}_k(\Gamma)$ can be computed by the Riemann-Roch Theorem \cite{shimura1971arithmetic}. If $\Gamma=SL_2(\ZZ)$, it reads:

$$\dim_{\CC}\mathcal{M}_{k}=\begin{cases}
\left \lfloor{\frac{k}{12}}\right \rfloor &\mbox{if } k\equiv 2 \mod 12  \\
\left \lfloor{\frac{k}{12}}\right \rfloor +1 &\mbox{otherwise  }
\end{cases}$$

Hence for $k=12$ or $16$  one obtains the following examples computed by Sage \cite{sage}: 

$$E_{24}=a\Delta^2+bE_{12}\Delta+E_{12}^2,$$
         $$\displaystyle a=-\frac{2^{14}\cdot 3^8\cdot 5^4\cdot7^4\cdot13^2\cdot1571}{103\cdot691^2\cdot2294797},\quad \quad \displaystyle b=-\frac{2^8\cdot3^5\cdot5^3\cdot7^2\cdot13^3\cdot37}{103\cdot691\cdot2294797},$$

$$E_{32}=a(E_4 \Delta)^2+b(E_4\Delta) E_{16}+E_{16}^2,$$
 $$ a=\frac{-2^{18}\cdot 3^8\cdot5^5\cdot7^4\cdot11\cdot13^2\cdot17^2\cdot4273}{37\cdot683\cdot3617^2\cdot305065927}, $$ $$  b=\frac{-2^{12}\cdot3^4\cdot5^3\cdot7^2\cdot13\cdot17^2\cdot23\cdot1433}{37\cdot683\cdot3617\cdot305065927},$$

 where $\Delta=\prod_{n=1}^{\infty}q(1-q^n)^{24}$ is the cusp form of weight 12 for $SL_2(\ZZ)$ and 
 
 $$E_k=1-\frac{2k}{B_k}\sum_{n=1}^{\infty}\sigma_{k-1}(n)q^n$$
  is the Eisenstein series of weight $k$ for $SL_2(\ZZ)$.

The following two results follows from earlier work by Bruinier, Kohnen and Ono \cite{bruinier2004arithmetic}, which describes remarkable algebraic information contained in the zeros of Hecke eigenforms. Our independent proof starts from the point of view of polynomial identities.

Let $$A:=\{\rho\in \mathfrak{F}|E_{12k}(\rho)=0 \text{ for some }  k \in\ZZ_{+} \}$$ be the set of zeros  of Eisenstein series of weight $12k$ that are contained in the fundamental domain  $\mathfrak{F}$ of $SL_2(\ZZ)$. By work of
Rankin and Swinnerton-Dyer \cite{rankin1970zeros} we know that 
$A\subset\{e^{i\theta}|\frac{\pi}{3}\leq\theta\leq \frac{\pi}{2} \} $.

\vspace{0.1in}
\begin{theorem}\label{alge1}
For all $\rho\in A$, $E_{12}(\rho)/\Delta(\rho)$ is an algebraic number.
\end{theorem}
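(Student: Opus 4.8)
The plan is to turn the transcendental-looking statement into a single polynomial identity: since $\Delta$ and $E_{12}$ generate all of $\mathcal{M}_{12k}$ as monomials, the vanishing of $E_{12k}$ at $\rho$ becomes a polynomial equation with rational coefficients satisfied by $E_{12}(\rho)/\Delta(\rho)$, which forces that quantity to be algebraic.

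First I would record the basis. Since $12k\equiv 0\pmod{12}$, the dimension formula quoted above gives $\dim_\CC\mathcal{M}_{12k}=k+1$. The $k+1$ forms $\Delta^{\,k-i}E_{12}^{\,i}$ for $0\le i\le k$ all lie in $\mathcal{M}_{12k}$, and because $\Delta=q+O(q^2)$ while $E_{12}=1+O(q)$, the form $\Delta^{\,k-i}E_{12}^{\,i}$ vanishes at $\infty$ to order exactly $k-i$. These orders are distinct, so the forms are linearly independent and hence constitute a basis of $\mathcal{M}_{12k}$, with leading $q$-coefficients all equal to $1$.

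Next I would expand $E_{12k}=\sum_{i=0}^{k}c_i\,\Delta^{\,k-i}E_{12}^{\,i}$ and check that $c_i\in\QQ$. The Fourier coefficients of $E_{12k}$ are rational, being rational multiples of $\sigma_{12k-1}(n)$ by the Eisenstein expansion, while $\Delta$ and $E_{12}$ have integer coefficients; the triangular leading-term structure noted above then lets one solve for the $c_i$ recursively, starting from $c_k=1$, and each $c_i$ comes out rational. Because $E_{12k}\neq 0$, the $c_i$ are not all zero. Now fixing $k$ with $E_{12k}(\rho)=0$ and evaluating at $\rho$, and using that $\Delta=q\prod_{n\ge 1}(1-q^n)^{24}$ has no zero on $\mathcal{H}$ so that $\Delta(\rho)\neq 0$, I would divide by $\Delta(\rho)^{k}$ to obtain
$$0=\sum_{i=0}^{k}c_i\left(\frac{E_{12}(\rho)}{\Delta(\rho)}\right)^{i}.$$
Thus $E_{12}(\rho)/\Delta(\rho)$ is a root of the nonzero polynomial $\sum_{i}c_iX^{i}\in\QQ[X]$, and is therefore algebraic.

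The step carrying the real content is the basis claim together with the rationality and nonvanishing of the coefficients $c_i$: once $E_{12k}$ is known to be a nonzero $\QQ$-linear combination of the monomials $\Delta^{\,k-i}E_{12}^{\,i}$, everything downstream is formal. There is no genuine analytic obstacle here, since the nonvanishing of $\Delta$ on $\mathcal{H}$ is classical; the only point to keep honest is that the expansion of $E_{12k}$ really has rational (hence algebraic) coefficients and is not identically zero.
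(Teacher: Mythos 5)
Your proof is correct and follows essentially the same route as the paper: both express $E_{12k}$ in the monomial basis $\Delta^{k-i}E_{12}^{i}$ of $\mathcal{M}_{12k}$ (linear independence via distinct orders of vanishing at $\infty$ plus the dimension count), solve for the coefficients by the triangular recursion starting from the constant term to see they are rational, and then evaluate at $\rho$ and divide by $\Delta(\rho)^{k}\neq 0$ to exhibit $E_{12}(\rho)/\Delta(\rho)$ as a root of a nonzero polynomial over $\QQ$. One immaterial slip: $E_{12}$ has rational, not integer, Fourier coefficients (the nonconstant ones have denominator $691$), but rationality is all your recursion requires.
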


\begin{proof}
Note that the $n+1$  monomials $E_{12}(z)^{n-l}\Delta(z)^{l}$ are linearly independent over $\CC$ by considering the order of vanishing at infinity. One also has $\dim_{\CC}M_{12n}=n+1$, so the above monomials form a basis for $M_{12n}$. Therefore there exist $a_0,a_1,\cdots, a_n$ such that

\begin{equation}\label{algeq}
 E_{12n}(z)=\sum_{l=0}^{n}a_lE_{12}(z)^{n-l}\Delta(z)^{l}.
\end{equation}

For $\rho\in A $ we have $E_{12n}(\rho)=0$. Divide equation \ref{algeq} by  $\Delta(\rho)\neq 0$ to obtain

\begin{equation}
\sum_{l=0}^{n}a_l\left(E_{12}(\rho)/\Delta(\rho)\right)^{n-l}=0.
\end{equation}
To show that $E_{12}(\rho)/\Delta(\rho)$ is algebraic, it suffices to show that all the $a_l$ are rational. One sees this by the following algorithm to compute $a_l$:

First $a_0=1$ by considering the constant term.    

Suppose $a_0,\cdots,a_l$ are all rational. Then the Fourier coefficients of the function  $E_{12n}(z)-\sum_{k=0}^{l}a_kE_{12}(z)^{n-k}\Delta(z)^k$ are all rational since each term has rational Fourier coefficients. Further we know that the order of the function $E_{12n}(z)-\sum_{k=0}^{l}a_kE_{12}(z)^{n-k}\Delta(z)^k$  at $i\infty$ is $O(q^{l+1})$. 
Now set $$a_{l+1}=\frac{E_{12n}(z)-\sum_{k=0}^{l}a_kE_{12}(z)^{n-k}\Delta(z)^k}{\Delta^{l+1}}\bigg|_{i\infty}$$ as the constant term. Then one sees that $a_{l+1}$ is rational since the Fourier coefficients of the function $$\frac{E_{12n}(z)-\sum_{k=0}^{l}a_kE_{12}(z)^{n-k}\Delta(z)^k}{\Delta^{l+1}}$$ are all rational.
\end{proof}
\begin{corollary}\label{alge2}
For all $\rho\in A$, the function value of the discriminant $j(\rho)$ is algebraic. 
\end{corollary}
\begin{proof}
By the following well known identities:
$$\Delta(z)=\frac{E_4^3-E_6^2}{1728}$$
$$j(z)=\frac{E_4^3}{\Delta(z)}$$
$$E_{12}(z)=\frac{441E_4^3+250E_6^2}{691}$$
one then computes 
$$\frac{E_{12}(z)}{\Delta(z)}-j(z)=-\frac{432000}{691}.$$
Therefore $\play j(\rho)=\frac{E_{12}(\rho)}{\Delta(\rho)}+\frac{432000}{691}$ is an algebraic number by Theorem \ref{alge1}.
\end{proof}
Now we prove the finiteness result stated in the introduction. 
\vspace{0.1in}
\begin{theorem} %\label{fin}
	For all $N\in\ZZ_+$ and  $a,b\in\mathbb{C}^{\times}$, there are at most finitely triples $(f,g,h)$ of newforms with respect to $\Gamma_1(N)$ of weights $k>2$ satisfying the equation  $h=af^2+bfg+g^2$.
\end{theorem}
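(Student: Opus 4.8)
The plan is to convert the statement into a bound on the weight. Because the ring of modular forms is graded and $a,b\neq 0$, the three monomials $f^2,fg,g^2$ can sum to the single homogeneous form $h$ only if $f$ and $g$ share a common weight $k$ and $h$ has weight $2k$; otherwise $f^2,fg,g^2$ lie in distinct graded pieces and no cancellation can occur. For each fixed $k$ the space $\mathcal{S}_k(\Gamma_1(N))^{\mathrm{new}}$ is finite dimensional and carries only finitely many normalized eigenforms, while the Eisenstein newforms of level $N$ and weight $k$ are indexed by the finite set of admissible character data; hence there are only finitely many newforms of each weight. It therefore suffices to prove that (\ref{polyid}) has no solution once $k$ exceeds an explicit bound depending only on $N,a,b$.

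Next I would isolate the Eisenstein contribution. As noted before Lemma~\ref{samech}, one cannot have $a_f(0)=a_g(0)=0$, for then the $q^1$-coefficient of the right-hand side vanishes and $h=0$ by (\ref{normalize}); so at least one of $f,g$ has nonzero constant term and is therefore an Eisenstein newform. By the basis description of $\mathcal{E}_k(\Gamma_1(N))$ in (\ref{eisen}), an Eisenstein newform with nonvanishing constant term must be $E_k^{\triv_1,\varphi}$ up to the scalar $\tfrac12$ that normalizes $a_1=1$, since only $\psi=\triv_1$ makes $\delta(\psi)\neq 0$ and a newform forces $t=1$. Its constant term is $c:=\tfrac12 L(1-k,\varphi)=-\tfrac{1}{2k}B_{k,\varphi}$, and $\varphi$ ranges over the finite set of primitive characters modulo $N$ with $\varphi(-1)=(-1)^k$.

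The heart of the argument is a comparison of the two lowest Fourier coefficients. Suppose first that $f$ is cuspidal and $g$ is the Eisenstein newform, with $a_g(0)=c\neq 0$. Matching $q^0$-coefficients gives $a_h(0)=c^2\neq 0$, so $h$ is itself an Eisenstein newform with $a_1(h)=1$. Matching $q^1$-coefficients and using $a_f(0)=0$, $a_1(f)=a_1(g)=1$ collapses the entire identity to the single scalar relation $(b+2)\,c=1$. The two remaining configurations behave identically: if $f=g$ (the case $a_f(0),a_g(0)\neq 0$ of Proposition~\ref{SameChar}) one gets $2(a+b+1)\,c=1$, and if instead $a_f(0)\neq 0$, $a_g(0)=0$ one gets $(2a+b)\,c=1$. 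In every case the constant term $c$ of the Eisenstein form is pinned to a fixed value determined by $a,b$ alone, and whenever the relevant coefficient $b+2$, $2(a+b+1)$, or $2a+b$ vanishes the relation reads $0=1$ and excludes solutions outright.

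The one genuinely analytic input, and the step I expect to be the main obstacle, is to contradict this pinning by showing $\abs{c}\to\infty$ with $k$. Since $c=-B_{k,\varphi}/(2k)$, this amounts to the growth $\abs{B_{k,\varphi}}\to\infty$ of generalized Bernoulli numbers: the functional equation for $L(s,\varphi)$ expresses $\abs{L(1-k,\varphi)}$ as $k!$ times an exponential factor times $\abs{L(k,\bar\varphi)}$, and $L(k,\bar\varphi)\to 1$, so $\abs{L(1-k,\varphi)}$ grows faster than any fixed exponential in $k$; because $\varphi$ runs over a finite set this growth is uniform. Consequently there is a $K=K(N,a,b)$ beyond which $\abs{c}$ exceeds each of the finitely many admissible pinned values, so no solution of (\ref{polyid}) exists for $k>K$. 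Combined with the finiteness of newforms in each weight, this bounds the number of triples $(f,g,h)$ and proves the theorem. I note that this route relies only on the explicit constant term of the Eisenstein factor and is somewhat more elementary than a direct estimation of cuspidal Fourier coefficients.
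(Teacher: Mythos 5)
Your proposal is correct and follows essentially the same route as the paper: identify the factor with nonzero constant term as the (suitably normalized) Eisenstein newform $E_k^{\triv_1,\varphi}$, compare $q^1$-coefficients to pin its Bernoulli-type constant term to a value determined by $a,b$ alone, and contradict this for large $k$ via the super-exponential growth of $|B_{k,\varphi}|=k\,|L(1-k,\varphi)|$, leaving only finitely many admissible weights and hence finitely many triples. The only differences are bookkeeping: you normalize Eisenstein series by $a_1=1$ rather than $a_0=1$ (so your pinned relations $(b+2)c=1$, $2(a+b+1)c=1$, $(2a+b)c=1$ replace the paper's $b+2\beta=\alpha$), and you dispatch the cases $f=g$ and ($f$ Eisenstein, $g$ cuspidal) by the same coefficient comparison, where the paper cites Johnson's classification for the former and passes over the latter as a symmetric case.
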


\begin{proof}
	By Proposition \ref{SameChar} and the remark thereafter
we only need to consider the case where $a_f(0)=0$ and $g$ is an Eisenstein series . Let $g=E_{k}^{\triv,\varphi}$,
$f\in \mathcal{M}_{k}(N,\varphi)$, $h= E_{2k}^{\triv,{\varphi}^2}$. For a fixed $N$ and $k$, the number of triples of such newforms $(f,g,h)$ is finite. We first show that $k$ is bounded.
Consider the Fourier expansions 
$$f=q+O(q^2)$$
$$g=1+\beta\left(\sum_{n=1}^{\infty}\sigma_{k-1}^{\varphi}(n)q^n\right)$$
$$h=1+\alpha\left(\sum_{n=1}^{\infty}\sigma_{2k-1}^{\varphi^2}(n)q^n\right)$$
where 
\begin{equation}\label{a}
\alpha=-\frac{4k}{B_{2k,\varphi^2}},
\end{equation}
 \begin{equation}\label{b}
 \beta=-\frac{2k}{B_{k,\varphi}},
 \end{equation}
$$\sigma_{k-1}^{\varphi}(n)=\sum_{d|n}\varphi(d)d^{k-1}.$$
Comparing the coefficients of $q$ on both sides of the equation $h=af^2+bfg+g^2$, one obtains
\begin{equation}\label{q}
b+2\beta=\alpha.
\end{equation}
Recall that we have  the following bound for the generalized Bernoulli number $B_{k,\chi}$, where $\chi$ is a primitive character with conductor $l_{\chi}$ and $\chi(-1)=(-1)^k$:
\begin{equation}\label{c}
2\zeta(2k)\zeta(k)^{-1}k!(2\pi)^{-k}l_{\chi}^{k-\frac{1}{2}}\leq |B_{k,\chi}|\leq 2\zeta(k)k!(2\pi)^{-k}l_{\chi}^{k-\frac{1}{2}}.
\end{equation}
See  \cite{johnson2013hecke}. Applying Stirling's formula in equation(\ref{c}) and using the fact that $\zeta(2k)/\zeta(k)$ is bounded  one sees that $|B_{k,\chi}|$ increases quickly as $k\rt\infty$. Then by equation (\ref{a}) and (\ref{b}) one sees that $\alpha,\beta\rt 0$ as $k\rt\infty$. The convergence is uniform for all $l_{\chi}$, hence for a fixed $b\neq 0$, $k$ is bounded for all $N$. 

For each fixed $k$, if $l_{\chi}\rt \infty$, then $|B_{k,\chi}|\rt \infty$ by equation (\ref{c}). Then equation (\ref{a})-(\ref{q}) show that $l_{\chi}$ is bounded. Note that $g,h$ are newforms, $\varphi$ and $\varphi^2$ are necessarily primitive, i.e., $l_\varphi=l_{\varphi^2}=N$. This shows that $N$ is bounded for each fixed $k$. Hence there can only be finitely many pairs $(N,k)$. Since for each pair there are only finitely many tuples of newforms, this shows the finiteness as claimed. 
 \end{proof}
%
%\begin{remark}
%The argument in \ref{fin} easily generalizes to two variable homogeneous polynomials of higher degree.
%\end{remark}

%\begin{corollary}
% There exist infinitely many polynomials $P\in\mathbb{Q}[x,y]$ such that $h=P(f,g)$ has no solution for $f,g,h$ Hecke eigenforms for any $\Gamma_1(N)$.
%\end{corollary}

\section{Maeda's conjecture and its applications }
In this section, following J.B. Conrey  and D.W. Farmer\cite{Farmer}, we will show how Maeda's conjecure provides information about Hecke eigenform identities. Throughout this section, all eigenforms are eigenforms for $SL_2(\ZZ)$ and we will write $\mathcal{S}_k$ instead of $\mathcal{S}_k(SL_2(\ZZ))$.

Recall that Maeda's conjecture (see \cite{Maeda} Conjecture 1.2) says the following:
\begin{conjecture}
	The Hecke algebra over $\QQ$ of $\mathcal{S}_k$ is
	simple (that is, a single number field) whose Galois closure over $\QQ$ has Galois
	group isomorphic to the symmetric group $\mathfrak{S}_n$, where $n = \dim \mathcal{S}_k$.
\end{conjecture}
The main result of this section is the following:
\vspace{0.1in}
\begin{theorem}\label{main}
	Assuming Maeda's conjecture for $\mathcal{S}_k$ and $\mathcal{S}_{2k}$, if $f\in \mathcal{S}_k$, $g\in \mathcal{S}_{2k}$ are any nonzero cusp eigenforms, then the Petersson inner product $\langle f^2,g\rangle $
	is nonzero.
\end{theorem}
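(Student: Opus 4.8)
The plan is to convert the non-vanishing of the Petersson product into a statement about expansion coefficients, and then to exploit the Galois symmetry supplied by Maeda's conjecture. Since $f$ is a cusp form of weight $k$, its square $f^2$ is a cusp form of weight $2k$, so I may write $f^2=\sum_{j=1}^n c_j g_j$, where $g_1,\dots,g_n$ is the basis of normalized Hecke eigenforms of $\mathcal{S}_{2k}$ and $n=\dim_{\CC}\mathcal{S}_{2k}$. Because eigenforms with distinct Hecke eigenvalue systems are orthogonal for the Petersson product, one computes $\langle f^2,g_j\rangle=\overline{c_j}\,\langle g_j,g_j\rangle$. As $\langle g_j,g_j\rangle>0$, the theorem is equivalent to the assertion that every $c_j$ is nonzero, so it suffices to rule out $c_{j_0}=0$ for a single index $j_0$.

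Next I would record the arithmetic of the $c_j$. Let $L$ be the compositum of the Hecke fields of $\mathcal{S}_k$ and $\mathcal{S}_{2k}$, a number field Galois over $\QQ$; all Fourier coefficients of $f$ and of the $g_j$ lie in $L$. Matching the first $n$ Fourier coefficients in $f^2=\sum_j c_j g_j$ gives an invertible linear system over $L$, so each $c_j\in L$. For $\sigma\in\Gal(L/\QQ)$, applying $\sigma$ coefficientwise yields $\sigma(f)^2=\sum_j \sigma(c_j)\,\sigma(g_j)$, where $\sigma(f)$ and $\sigma(g_j)$ are again normalized eigenforms of weights $k$ and $2k$ obtained by conjugating eigenvalues. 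Thus $\sigma$ permutes the two sets of eigenforms, say $\sigma(g_j)=g_{\pi_\sigma(j)}$ and $\sigma(f)=f_{\rho_\sigma}$, and the coefficient of $g_{\pi_\sigma(j)}$ in $\sigma(f)^2$ equals $\sigma(c_j)$. In particular vanishing is preserved by this joint action: if $\sigma$ fixes $f$, comparing expansions gives $c_{\pi_\sigma(j)}=\sigma(c_j)$, so the set $\{j:c_j=0\}$ is stable under the subgroup $P\subseteq\mathfrak{S}_n$ that is the image of $\Gal(L/K_f)$ acting on $\mathcal{S}_{2k}$, where $K_f$ is the Hecke field of $f$.

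The crux is then to prove that $P$ is transitive on $\{1,\dots,n\}$; granting this, $c_{j_0}=0$ forces $c_j=0$ for all $j$, i.e.\ $f^2=0$, contradicting $f\neq 0$. Here I would invoke Maeda's conjecture for both spaces: the actions of $\Gal(L/\QQ)$ on the $m=\dim_{\CC}\mathcal{S}_k$ eigenforms of weight $k$ and on the $n$ eigenforms of weight $2k$ are the full symmetric groups $\mathfrak{S}_m$ and $\mathfrak{S}_n$. Writing $H$ for the image of $\Gal(L/\QQ)$ in $\mathfrak{S}_m\times\mathfrak{S}_n$, both projections of $H$ are onto, and $P$ contains the kernel $N$ of $H\to\mathfrak{S}_m$, which Goursat's lemma shows to be a normal subgroup of $\mathfrak{S}_n$. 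If $N\neq 1$ then $N\supseteq A_n$ is transitive and we are done; for $n\geq 5$ the only way to have $N=1$ is a surjection $\mathfrak{S}_m\twoheadrightarrow\mathfrak{S}_n$ that is an isomorphism, forcing $m=n$, which is excluded because the dimension formula gives $\dim_{\CC}\mathcal{S}_{2k}>\dim_{\CC}\mathcal{S}_k$ for every $k\geq 12$.

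I expect the main obstacle to be exactly this last step: controlling the \emph{joint} Galois action, i.e.\ showing that the symmetries of $\mathcal{S}_k$ and $\mathcal{S}_{2k}$ are independent enough that fixing $f$ does not freeze the weight-$2k$ eigenforms. The inequality $\dim_{\CC}\mathcal{S}_{2k}\neq\dim_{\CC}\mathcal{S}_k$ is precisely what rules out the degenerate ``graph of an isomorphism'' configuration. The low-dimensional cases $n\le 4$, where $\mathfrak{S}_n$ has extra normal subgroups (such as the transitive $V_4\trianglelefteq\mathfrak{S}_4$) and where $f$ may even have rational coefficients so that $P$ is the full image of $\Gal(L/\QQ)$, would be dispatched by a short separate check, again using $\dim_{\CC}\mathcal{S}_{2k}>\dim_{\CC}\mathcal{S}_k$.
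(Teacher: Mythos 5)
Your proposal is correct, and its skeleton matches the paper's: expand $f^2=\sum_j c_jg_j$ in the eigenbasis of $\mathcal{S}_{2k}$, show the $c_j$ lie in the compositum $L=F_1F_2$ of the two Hecke fields, note $\langle f^2,g_j\rangle\neq 0$ iff $c_j\neq 0$, and use Maeda to make a Galois group act transitively on $\{g_j\}$ so that one vanishing $c_j$ forces $f^2=0$. Where you genuinely diverge is the decisive group-theoretic step. The paper identifies your $N=\ker(H\to\mathfrak{S}_m)$ with $\Gal(F_2/F_1\cap F_2)$ and pins it down by analyzing $F_1\cap F_2$: for $d_2\geq 5$ it shows $N$ is $A_{d_2}$ or $\mathfrak{S}_{d_2}$, but for $d_2\leq 4$ (i.e.\ $k\leq 28$) it must prove $F_1\cap F_2=\QQ$ directly, which for $k=24,28$ is done by Sage computations of field discriminants combined with Minkowski's theorem. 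Your Goursat argument --- $N\trianglelefteq\mathfrak{S}_n$ with $\mathfrak{S}_n/N$ a quotient of $\mathfrak{S}_m$, and $n>m$ ruling out $N=1$ --- replaces those computations by pure group theory. Pushed slightly further, your ``short separate check'' for $n\leq 4$ essentially evaporates: every nontrivial normal subgroup of $\mathfrak{S}_n$ is transitive for every $n\geq 2$ (this includes $V_4\trianglelefteq\mathfrak{S}_4$), and $N=1$ is impossible whenever $n>m$, since $\mathfrak{S}_n$ would then be a quotient of $\mathfrak{S}_m$ while $n!>m!$. Concretely, the only cases with $n\leq 4$ and $m\geq 2$ are $k=24,28$, where $(m,n)=(2,4)$, so $N$ has index at most $2$ in $\mathfrak{S}_4$ and is therefore $A_4$ or $\mathfrak{S}_4$; all other low-weight cases have $m=1$, where $P$ is all of $\mathfrak{S}_n$ by Maeda. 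So your route, once completed, is uniform and computation-free, which is an improvement on the paper's proof.

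Two points in your write-up are asserted rather than proved and should be flagged. First, you claim that matching the first $n$ Fourier coefficients gives an \emph{invertible} linear system; the nonsingularity of the matrix $\left(a_i(g_j)\right)_{1\leq i,j\leq n}$ is a genuine claim, which the paper establishes via the valence formula: a nonzero cusp form of weight $2k$ cannot vanish at $\infty$ to order greater than $\lfloor 2k/12\rfloor=\dim\mathcal{S}_{2k}$. (Alternatively, uniqueness of the expansion plus the action of $\Aut(\CC/L)$ gives $c_j\in L$ without any truncated system.) Second, the $n\leq 4$ endgame is left as a sketch; as explained above it does close, but the enumeration of cases ($m=1$, or $(m,n)=(2,4)$ for $k=24,28$) needs to be written out. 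Neither gap is fatal.
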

An immediate corollary of Theorem \ref{main} is:
\vspace{.1in}
\begin{theorem}	
	Assuming Maeda's conjecture for all $\mathcal{S}_k$, then identities between cusp eigenforms for $SL_2(\ZZ)$ of the form $f^2=\sum_{i=1}^{d}c_ig_i$ are all forced by dimension considerations, where $f\in \mathcal{S}_{k}$, $g_i\in \mathcal{S}_{2k}$ and $c_i\ne 0$. In particular, there are only two identities  of the form $f^2=a_1g_1+a_2g_2$ given by Table 1.
	\end{theorem}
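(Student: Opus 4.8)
The plan is to deduce this corollary directly from Theorem \ref{main} together with the unconditional fact that $\mathcal{S}_{2k}$ possesses an orthogonal basis of normalized Hecke eigenforms. First I would note that if $f\in\mathcal{S}_k$ is a cusp form, then $f^2$ vanishes to order at least $2$ at the unique cusp, so $f^2\in\mathcal{S}_{2k}$. Set $n=\dim_{\CC}\mathcal{S}_{2k}$ and let $\{g_1,\dots,g_n\}$ be the basis of normalized Hecke eigenforms of $\mathcal{S}_{2k}$; such a basis exists because the Hecke operators $T_m$ commute, are self-adjoint for the Petersson product, and satisfy multiplicity one, hence are simultaneously diagonalizable with pairwise distinct eigensystems. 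Distinct eigenforms are orthogonal, so expanding $f^2=\sum_{i=1}^{n}c_ig_i$ yields $c_i=\langle f^2,g_i\rangle/\langle g_i,g_i\rangle$. Note that this setup uses Maeda's conjecture only through Theorem \ref{main}; the existence of the eigenbasis itself is unconditional.

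Next I would invoke Theorem \ref{main}: since $f$ and each $g_i$ are nonzero cusp eigenforms of weights $k$ and $2k$, we have $\langle f^2,g_i\rangle\neq 0$, whence every $c_i\neq 0$. As $\{g_i\}$ is a basis, the expansion of $f^2$ is unique, so $f^2$ cannot lie in the span of any proper subset of the eigenbasis. Consequently, any identity $f^2=\sum_{i=1}^{d}c_i'g_i'$ with distinct eigenforms $g_i'$ and all $c_i'\neq 0$ must exhaust the whole eigenbasis, i.e.\ $d=n$ and $\{g_1',\dots,g_d'\}=\{g_1,\dots,g_n\}$. This is the precise meaning of \emph{forced by dimension considerations}: an identity with exactly $d$ nonzero terms can occur only when $\dim_{\CC}\mathcal{S}_{2k}=d$, and in that case the identity is automatic, being merely the expansion of $f^2$ in the full eigenbasis.

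For the "in particular" statement I specialize to $d=2$, which by the previous paragraph forces $\dim_{\CC}\mathcal{S}_{2k}=2$ while requiring a nonzero eigenform $f\in\mathcal{S}_k$, i.e.\ $\dim_{\CC}\mathcal{S}_k\geq 1$. Using the dimension formula for $SL_2(\ZZ)$ recorded above, together with $\dim\mathcal{S}_m=\dim\mathcal{M}_m-1$ for even $m\geq 4$ and $\mathcal{S}_m=0$ for odd $m$, I would verify that $\dim_{\CC}\mathcal{S}_{2k}=2$ holds exactly for $2k\in\{24,28,30,32,34,38\}$. Discarding the weights for which $\mathcal{S}_k=0$, namely $k=14$ (where $\dim\mathcal{S}_{14}=0$) and the odd weights $k=15,17,19$, leaves precisely $k=12$ and $k=16$. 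In these two cases $f$ is the unique normalized eigenform $\Delta$, respectively $E_4\Delta$, and $f^2=\Delta^2\in\mathcal{S}_{24}$, respectively $(E_4\Delta)^2\in\mathcal{S}_{32}$, expands in a two-dimensional space; the explicit coefficients $a_1,a_2$ are those recorded in Table 1, completing the enumeration.

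Because Theorem \ref{main} is assumed, the substance of this corollary is conceptual rather than computational. The main obstacle I anticipate is purely one of formulation: pinning down what an admissible "identity" is (distinct eigenforms, nonzero coefficients) and checking the orthogonality and uniqueness facts that convert the nonvanishing $\langle f^2,g_i\rangle\neq 0$ into the assertion that no sparse identity can exist. The final enumeration is then a routine application of the dimension formula, where the only care needed is to exclude the odd weights and weight $14$ in which no cusp eigenform $f$ is available.
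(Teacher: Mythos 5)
Your proposal is correct and follows essentially the same route as the paper: invoke Theorem \ref{main} to conclude that every coefficient in the eigenbasis expansion of $f^2$ is nonzero, deduce $d=\dim\mathcal{S}_{2k}$, and then for $d=2$ reduce to $k=12$ and $k=16$ via the dimension formula (your enumeration of the weights with $\dim\mathcal{S}_{2k}=2$ and $\mathcal{S}_k\neq 0$ just makes explicit a step the paper states without detail). The only nitpick is that with the paper's Hermitian convention for the Petersson product one should write $c_i=\langle g_i,f^2\rangle/\langle g_i,g_i\rangle$ rather than $\langle f^2,g_i\rangle/\langle g_i,g_i\rangle$, but since nonvanishing is unaffected by conjugation this changes nothing.
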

\begin{proof}
By Theorem \ref{main}, we know that $\langle f^2,g_i\rangle\ne 0$ for all $i=1,\cdots,\dim\mathcal{S}_{2k}$. Then we have $d=\dim \mathcal{S}_{2k}$, i.e., the given identity is forced by dimension considerations.
In particular, if $f^2=a_1g_1+a_2g_2$,  then $a_1\ne 0$ and $a_2\ne 0$ since $f^2$ is not itself an eigenform. Then, by Maeda's conjecture we have $\dim_{\CC}\mathcal{S}_{2k}=2$, and so $k=12$ or $16$. See Table 1 for the data on the two cases, which we computed with Sage\cite{sage}.
\end{proof}
\begin{table}
\begin{center}
 	\begin{tabular}{|c|c|c|c|}
 		\hline 
 		$f$ & $g_1$ & $g_2$ & $a_1=-a_2$ \\ 
 		\hline 
 		$\Delta$ & $E_{12}\Delta+(12\sqrt{144169}+\frac{32404}{691})\Delta^2$ & $\sigma(g_1)$ & $\frac{24}{\sqrt{144169}}$ \\ 
 		\hline 
 		$E_4\Delta$ & $\Delta(xE_4^5+(1-x)E_4^2E_6^2)$ & $\sigma(g_1)$ & $\frac{24}{\sqrt{18295489}}$\\ 
 		\hline 
 		\multicolumn{4}{|c|}{$x =\frac{12\sqrt{18295489}+20532}{1728}$ ,  $\sigma$ is the nontrivial element of $\Gal(F/\QQ)$}\\
 		\hline
 	\end{tabular}
 	\caption*{Table 1}
\end{center}
  \end{table}
We set up notation following \cite{Farmer} before giving the proof of Theorem \ref{main}. 
 
Recall that $\Delta = q\prod_{n=1}^{\infty}(1-q^n)$ is the discriminant and $$E_4=1+240\sum_{n=1}^{\infty}\sigma_3(n)q^n$$ is the Eisenstein series of weight $4$. Then the set 
 \begin{equation*} B_{\QQ}:=\{\Delta^jE_4^{\frac{k}{4}-3j}\} 
 \end{equation*} 
 forms a basis of $\mathcal{S}_k$ with integer Fourier coefficients\cite{Farmer}.  The matrix representation of the Hecke operator $T_n$ in the basis $B_{\QQ}$ has integer entries, hence the characteristic polynomial $T_{n,k}(x)$ of $T_n$ acting on $\mathcal{S}_k$ has integer coefficients and the eigenvalues $a_n$ of $T_n$ are all algebraic integers\cite{Farmer}. Let $d=\dim \mathcal{S}_k$. One defines the Hecke field $F$ associated with $\mathcal{S}_k$ by 
 $$F:=\QQ(a_j(n): 1\leq j\leq d, n\in\ZZ_{+}).$$
 
 The following two lemmas proved in \cite{Farmer} play an important role in this section.
 
 \vspace{.1in}
 \begin{lemma}
 	The Hecke field $F$ equals $\QQ(a_j(n):1\leq j,n\leq d)$. In particular, $F$ is a finite Galois extension of $\QQ$. 
 \end{lemma}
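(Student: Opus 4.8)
The plan is to show that the finitely many operators $T_1,\dots,T_d$ already span, over $\QQ$, the full Hecke algebra $\mathbb{T}:=\sum_{n\ge1}\QQ\,T_n\subseteq\End(\mathcal{S}_k)$, and then to transport this through the eigenvalue homomorphisms. First I would record that $\mathbb{T}$ is genuinely an algebra: the multiplication rule (\ref{multi}) (with trivial character, since we are at level one) expresses each product $T_mT_n$ as a $\QQ$-linear combination of the $T_\ell$, so $\{T_n\}_{n\ge1}$ is closed under multiplication and spans $\mathbb{T}$ as a $\QQ$-vector space. Each eigenform $f_j$ gives a $\QQ$-algebra homomorphism $\lambda_j\colon\mathbb{T}\to\CC$ with $\lambda_j(T_n)=a_j(n)$. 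Hence, once we know that $T_1,\dots,T_d$ is a $\QQ$-basis of $\mathbb{T}$, we may write any $T_m=\sum_{n=1}^d c_{m,n}T_n$ with $c_{m,n}\in\QQ$ and apply $\lambda_j$ to obtain
$$a_j(m)=\sum_{n=1}^d c_{m,n}\,a_j(n)\in\QQ\big(a_{j'}(n):1\le j',\,n\le d\big).$$
Letting $m$ and $j$ range, this yields $F=\QQ(a_j(n):1\le j,n\le d)$, which is the first assertion.

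The heart of the argument is therefore that $T_1,\dots,T_d$ form a $\QQ$-basis of $\mathbb{T}$. For linear independence I would use the triangular structure of the basis $B_\QQ=\{\Delta^jE_4^{k/4-3j}\}$: the element $\Delta^jE_4^{k/4-3j}$ vanishes to order exactly $j$ at $i\infty$, and these orders run through $1,2,\dots,d$ as $j$ does, so the coordinate map $\mathcal{S}_k(\QQ)\to\QQ^d$, $f\mapsto(a_1(f),\dots,a_d(f))$, is a $\QQ$-isomorphism. Using $a_1(T_nf)=a_n(f)$, a relation $\sum_{n=1}^d c_nT_n=0$ in $\End(\mathcal{S}_k)$ gives $\sum_{n=1}^d c_n\,a_n(f)=0$ for every $f$, and surjectivity of the coordinate map forces $c_1=\dots=c_d=0$. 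For the reverse bound $\dim_\QQ\mathbb{T}\le d$ I would use that the $T_n$ are simultaneously diagonalized by the eigenbasis $f_1,\dots,f_d$, so $T\mapsto(\lambda_1(T),\dots,\lambda_d(T))$ embeds $\mathbb{T}\otimes_\QQ\CC$ into $\CC^d$; hence $\dim_\QQ\mathbb{T}\le d$, and together with the independence just shown, $T_1,\dots,T_d$ is a basis. Equivalently, one may invoke the perfect Hecke pairing $(T,f)\mapsto a_1(Tf)$ between $\mathbb{T}$ and $\mathcal{S}_k(\QQ)$, which packages both facts simultaneously.

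Finally, finiteness and the Galois property. Since $F=\QQ(a_j(n):1\le j,n\le d)$ is generated by finitely many algebraic integers — the $a_j(n)$ being roots of the monic integer characteristic polynomials $T_{n,k}(x)$ of the integer matrices $T_n$ — the extension $F/\QQ$ is finite. For normality, observe that for each fixed $n$ the full set $\{a_1(n),\dots,a_d(n)\}$ consists of all roots of $T_{n,k}(x)\in\QQ[x]$, namely the eigenvalues of $T_n$ on the $d$-dimensional space $\mathcal{S}_k$ counted with multiplicity. As $F$ contains and is generated by all these roots for $n=1,\dots,d$, it is precisely the splitting field over $\QQ$ of $\prod_{n=1}^d T_{n,k}(x)$, hence normal; being a finite separable (characteristic $0$) normal extension, $F/\QQ$ is Galois.

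I expect the main obstacle to be the claim that the \emph{first} $d$ Hecke operators span $\mathbb{T}$, which rests on the identification of $f\mapsto(a_1(f),\dots,a_d(f))$ as an isomorphism. Everything else is formal algebra once this input — essentially the level-one structure theory (a Miller-type basis), here read off directly from the triangularity of $B_\QQ$ — is in place.
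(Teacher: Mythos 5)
The paper itself offers no proof of this lemma: it is imported verbatim from Conrey--Farmer \cite{Farmer} (``The following two lemmas proved in \cite{Farmer}\dots''), so there is no internal argument to compare yours against. Your reconstruction follows what is in fact the standard route, and its architecture is sound: show $T_1,\dots,T_d$ span the $\QQ$-Hecke algebra $\mathbb{T}$, deduce $a_j(m)=\sum_n c_{m,n}a_j(n)$ with $c_{m,n}\in\QQ$, and get normality by identifying $F$ with the splitting field of $\prod_{n=1}^d T_{n,k}(x)$ (legitimate, since the eigenbasis diagonalizes each $T_n$, so the roots of $T_{n,k}$ are precisely $a_1(n),\dots,a_d(n)$). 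The linear-independence half, via triangularity of $B_\QQ$ and $a_1(T_nf)=a_n(f)$, is also fine.

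One step, as written, is a non sequitur and needs a patch: from ``$T\mapsto(\lambda_1(T),\dots,\lambda_d(T))$ embeds $\mathbb{T}\otimes_\QQ\CC$ into $\CC^d$'' you conclude $\dim_\QQ\mathbb{T}\le d$, but the embedding itself is not justified. Simultaneous diagonalization only embeds the \emph{$\CC$-span} of the $T_n$ into $\CC^d$; identifying that span with $\mathbb{T}\otimes_\QQ\CC$ requires injectivity of $\mathbb{T}\otimes_\QQ\CC\to\End(\mathcal{S}_k)$, which fails for general $\QQ$-subspaces of a $\CC$-vector space ($\QQ$-independent operators could a priori become $\CC$-dependent, exactly as $1,\sqrt2\in\CC$ are $\QQ$-independent but $\CC$-dependent, and then the dimension count collapses). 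The fix is the rational structure the paper already records: each $T_n$ has an integer matrix in the basis $B_\QQ$, so $\mathbb{T}\subseteq\End_\QQ\bigl(\mathcal{S}_k(\QQ)\bigr)\cong M_d(\QQ)$, and rational matrices that are $\QQ$-independent remain $\CC$-independent (rank of a rational matrix is unchanged under field extension); then $\dim_\QQ\mathbb{T}$ equals the dimension of the $\CC$-span and the diagonalization bound applies. Alternatively, your parenthetical pairing argument, run entirely over $\QQ$ --- $(T,f)\mapsto a_1(Tf)$ on $\mathbb{T}\times\mathcal{S}_k(\QQ)$, with the left kernel killed via $a_n(Tf)=a_1(T_nTf)=a_1(TT_nf)$ using commutativity from (\ref{multi}) --- avoids base change altogether and is the cleanest repair; note it uses the same rationality input to make the pairing $\QQ$-valued. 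With that patch the proof is complete.
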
 
 
 The Galois group $\Gal(F/\QQ)$ acts on functions with Fourier coefficients in $F$ in the following way:
 \begin{equation}
 \sigma\left(\sum a_n q^n\right)=\sum\sigma(a_n)q^n.
 \end{equation} 
 
 \begin{lemma}
 	The group $\Gal(F/\QQ)$ acts on the set of $B =\{f_1,\cdots,f_d\}$ of normalized cusp eigenforms. If $T_{n,k}(x)$ is irreducible for some $n$, then the action is transitive. Furthermore, if $T_{n,k}(x)$ is irreducible then $F/\QQ$ is the splitting field of $T_{n,k}(x)$ and $\Gal(F/\QQ) =\Gal(T_{n,k})$.
 \end{lemma}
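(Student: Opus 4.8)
The plan is to prove the three assertions separately, the common engine being that $T_n$ acts on $q$-expansions through the integer-coefficient formula recorded in the Preliminaries, so that every $\sigma\in\Gal(F/\QQ)$ commutes with $T_n$ in the sense $\sigma(T_n h)=T_n(\sigma h)$ for any $h$ with Fourier coefficients in $F$. To see first that $\Gal(F/\QQ)$ permutes $B$, take $f\in B$ with $T_n f=a_n(f)\,f$ and $a_1(f)=1$. Since $B_\QQ$ is a basis of $\mathcal{S}_k$ with rational Fourier coefficients, expressing $f$ in this basis and applying $\sigma$ to the coordinates shows that $\sigma f\in\mathcal{S}_k$ and that its Fourier coefficients are $\sigma(a_n(f))\in F$. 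Applying $\sigma$ to the eigenvalue equation and using commutation gives $T_n(\sigma f)=\sigma(a_n(f))\,(\sigma f)$, while $a_1(\sigma f)=\sigma(1)=1$; hence $\sigma f$ is again a normalized eigenform, i.e. $\sigma f\in B$. Compatibility with composition is immediate, so this defines an action of $\Gal(F/\QQ)$ on $B$.

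For the transitivity statement I would use that $B$ diagonalizes $T_n$, so $T_{n,k}(x)=\prod_{j=1}^{d}(x-a_j(n))$, where $a_j(n)$ is the $n$-th eigenvalue of $f_j$. If $T_{n,k}$ is irreducible over $\QQ$, then these $d$ roots are distinct, so $f_j\mapsto a_j(n)$ is a bijection of $B$ onto the root set, and it is $\Gal(F/\QQ)$-equivariant because $\sigma f_j$ has $n$-th eigenvalue $\sigma(a_j(n))$. The Galois group of any field containing the splitting field of an irreducible polynomial acts transitively on its roots; since all $a_j(n)$ lie in $F$, this transitivity on the roots transports along the equivariant bijection to transitivity on $B$.

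The identification of $F$ as a splitting field is the step I expect to require the most care. Set $K:=\QQ(a_1(m):m\in\ZZ_{+})$, the field generated by the Fourier coefficients of the single eigenform $f_1$. By orbit–stabilizer applied to the action above, the stabilizer of $f_1$ in $\Gal(F/\QQ)$ is $\Gal(F/K)$, so the orbit of $f_1$ has size $[K:\QQ]$; by the transitivity just established this orbit is all of $B$, whence $[K:\QQ]=d$. On the other hand $a_1(n)\in K$ is a root of the irreducible degree-$d$ polynomial $T_{n,k}$, so $[\QQ(a_1(n)):\QQ]=d$ and therefore $K=\QQ(a_1(n))$. Finally, every $f_j$ is a $\Gal(F/\QQ)$-conjugate of $f_1$, so $F$ — which is generated by the Fourier coefficients of all the $f_j$ — is the Galois closure of $K=\QQ(a_1(n))$, that is, the splitting field of $T_{n,k}$; consequently $\Gal(F/\QQ)=\Gal(T_{n,k})$.

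The main obstacle is precisely this last paragraph: passing from the a priori large generating set of $F$ (all Fourier coefficients of all eigenforms) to the single generator $a_1(n)$. The two ingredients that accomplish this are the orbit-counting identity (orbit size equals $[K:\QQ]$) and the degree constraint forced by irreducibility (which pins $[\QQ(a_1(n)):\QQ]=d$). Everything else is formal once commutation of $\Gal(F/\QQ)$ with the Hecke operators and the factorization $T_{n,k}(x)=\prod_j(x-a_j(n))$ are in hand; I would therefore front-load the verification that $\sigma$ commutes with $T_n$ and that $\sigma f$ remains a normalized eigenform, since both transitivity and the splitting-field identification rely on it.
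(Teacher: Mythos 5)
Your proof is correct, and it is essentially the standard argument from Conrey--Farmer \cite{Farmer}, which the paper itself only cites rather than reproduces: Galois acts on eigenforms because the Hecke operators are rational in the basis $B_{\QQ}$, irreducibility gives transitivity via the equivariant bijection $f_j\mapsto a_j(n)$ onto the roots of $T_{n,k}$, and the orbit--stabilizer count pins down $K=\QQ(a_1(n))$ so that $F=\QQ(a_1(n),\dots,a_d(n))$ is the splitting field. The only point worth making explicit is that your orbit--stabilizer step uses that $F/\QQ$ is finite Galois, which is exactly the content of the preceding lemma in the paper.
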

 
 Now we begin the proof of Theorem \ref{main}.

 \begin{proof}
  Let $f\in \mathcal{S}_{k}$ be a normalized Hecke eigenform. Since $f^2\in \mathcal{S}_{2k}$ we expand $f^2$ in a normalized eigenbasis to obtain 
 \begin{equation} \label{identity}
 f^2=\sum_{i=1}^{d_2}c_ig_i
 \end{equation}
 for some $c_i\in\CC$ and $d_2=\dim \mathcal{S}_{2k}$. 
  Let $F_1$ (resp. $F_2$) be the Hecke field for $\mathcal{S}_k$ (resp. $\mathcal{S}_{2k}$) and $F_1F_2$ be the composite field of $F_1$ and $F_2$. Then we have the following: 
 \begin{lemma}\label{composite}
 	$c_i\in F_1F_2$ for  $1\leq i\leq d_2$.
 	
 \end{lemma}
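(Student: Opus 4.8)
The plan is to pin down the $c_i$ by comparing only finitely many Fourier coefficients, converting the identity (\ref{identity}) into a square linear system whose coefficient matrix has entries in $F_2$ and whose right-hand side has entries in $F_1$; solving it by Cramer's rule then places the $c_i$ in the composite field $F_1F_2$.

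First I would invoke the standard fact that a cusp form in $\mathcal{S}_{2k}$ for $SL_2(\ZZ)$ is uniquely determined by its first $d_2$ Fourier coefficients $a_1,\dots,a_{d_2}$. This follows from the existence of an echelon (Miller) basis $\{h_1,\dots,h_{d_2}\}$ with $h_i = q^i + O(q^{d_2+1})$, which shows that the coordinate map $g\mapsto (a_1(g),\dots,a_{d_2}(g))$ is a linear isomorphism $\mathcal{S}_{2k}\to \CC^{d_2}$. Since $g_1,\dots,g_{d_2}$ form a basis of $\mathcal{S}_{2k}$, their coordinate vectors are linearly independent, so the matrix $M=(a_n(g_i))_{1\le n,i\le d_2}$, whose $i$-th column is the coordinate vector of $g_i$, is invertible.

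Next I would extract the coefficient of $q^n$ from (\ref{identity}) for $n=1,\dots,d_2$, obtaining the linear system $M\mathbf{c}=\mathbf{b}$, where $\mathbf{c}=(c_i)_{i}$ and $\mathbf{b}=(a_n(f^2))_n$. The entries $a_n(g_i)$ of $M$ are Fourier coefficients of the normalized eigenforms $g_i$, hence lie in $F_2$ by the definition of the Hecke field. For the right-hand side, I would use that $a_n(f^2)=\sum_{i+j=n} a_i(f)a_j(f)$ is an integer polynomial in the Fourier coefficients of $f$; since those coefficients (the Hecke eigenvalues of $f$) lie in $F_1$, each $a_n(f^2)\in F_1\subseteq F_1F_2$. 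By Cramer's rule $c_i=\det(M_i)/\det(M)$, where $M_i$ is $M$ with its $i$-th column replaced by $\mathbf{b}$; the determinants are integer-polynomial expressions in entries of $F_1F_2$, and $\det M\ne 0$, so each $c_i\in F_1F_2$.

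The only step genuinely needing justification is the invertibility of $M$, i.e.\ that the eigenbasis is separated by the first $d_2$ coefficients; but this is exactly the classical statement that a level-one cusp form is determined by $a_1,\dots,a_{\dim}$, so I expect no real obstacle. Everything else is linear algebra over fields together with the observation that squaring a $q$-series keeps coefficients in the same field.
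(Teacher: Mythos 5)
Your proposal is correct and follows essentially the same route as the paper: compare the first $d_2$ Fourier coefficients of $f^2=\sum_i c_ig_i$ to get a square linear system, show the coefficient matrix $(a_n(g_i))_{1\le n,i\le d_2}$ is nonsingular, and solve by Cramer's rule using $a_n(g_i)\in F_2$ and $a_n(f^2)\in F_1$. The only divergence is how nonsingularity is justified: you invoke the Miller (echelon) basis to see that $g\mapsto(a_1(g),\dots,a_{d_2}(g))$ is an isomorphism, whereas the paper argues by contradiction, noting that a nonzero form in the kernel would vanish to order at least $d_2+1$ at $\infty$ and then deriving $\lfloor 2k/12\rfloor+1\le 2k/12$ from the valence formula; these are two standard proofs of the same classical fact, so either suffices.
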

 
 \begin{proof} 
 	
 	 Comparing the Fourier coefficients in equation (\ref{identity}), one obtains the linear equations 
 	\begin{equation}
 	\sum_{i=1}^{d_2}c_ia_n(g_i)=a_n(f^2)
 	\end{equation}
 	for $1\leq n\leq d_2$. We claim that the coefficient matrix $(a_n(g_i))_{{1\leq n,i\leq d_2}}$ is nonsingular. Assume our claim for now.  Solve $c_i$ by Cramer's rule and note that $a_n(g_i)\in F_2$ and $a_n(f^2)\in F_1$, then one sees $c_i\in F_1F_2$. 
 	
 	Now we prove our claim. Suppose for a contradiction that the coefficient matrix $(a_n(g_i))_{{1\leq n,i\leq d_2}}$ is singular, then there exit $c_1,\cdots, c_n\in\CC$ such that $$0=\sum_{i}c_ia_n(g_i)=a_n(\sum_{i}c_ig_i)$$ for $1\leq n\leq d_2$, i.e., the cusp form $g:=\sum_{i}c_ig_i\in \mathcal{S}_{2k}$ satisfies $a_n(g)=0$ for $1\leq n\leq d_2$. The order of vanishing of $g$ at $\infty$ satisfies $v_{\infty}(g)\geq d_2+1=\lfloor\frac{2k}{12}\rfloor+1$. Recall that the valence formula (Theorem VII. 3.(iii) in \cite{MR0344216}) for $g$ reads
$$v_{\infty}(g)+\frac{1}{2}v_{i}(g)+\frac{1}{3}v_{\omega}(g)+\sum_{\substack{\tau\in\mathcal{H}/SL_2(\ZZ)\\ \tau\neq i,\omega}}v_{\tau}(g)=\frac{2k}{12}.$$  

Since all the terms on the left are positive, one obtains the contradiction $\lfloor\frac{2k}{12}\rfloor+1\leq \frac{2k}{12} $. This proves our claim, and hence the lemma.
 \end{proof}
 
 \begin{lemma}\label{tran2} 
 	If the stabilizer of $f^2$ in $\Gal(F_1F_2/\QQ)$ acts transitively on the set $B_2=\{g_1,\cdots,g_{d_2}\}$ of cusp eigenforms of weight $2k$, then we have $\langle f^2,g_i\rangle \neq 0$ for $1\leq i\leq d_2$.
 \end{lemma}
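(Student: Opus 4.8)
The plan is to convert the non-vanishing of each Petersson inner product $\langle f^2,g_i\rangle$ into the non-vanishing of the coefficient $c_i$ in the expansion (\ref{identity}), and then to feed the hypothesis into the Galois action: transitivity of the stabilizer of $f^2$ will force the $c_i$ to form a single Galois orbit, hence to be either all zero or all nonzero, and since $f\neq 0$ they cannot all be zero.

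First I would record the standard fact that distinct normalized Hecke eigenforms for $SL_2(\ZZ)$ are mutually orthogonal with respect to the Petersson inner product. Expanding $f^2=\sum_j c_j g_j$ and using the definition of $\langle\,\cdot\,,\,\cdot\,\rangle$, which is conjugate-linear in the first argument, orthogonality collapses the sum so that $\langle f^2,g_i\rangle=\overline{c_i}\,\langle g_i,g_i\rangle$. Since $g_i\neq 0$ has $\langle g_i,g_i\rangle\neq 0$, this shows $\langle f^2,g_i\rangle\neq 0$ precisely when $c_i\neq 0$. The problem therefore reduces to proving $c_i\neq 0$ for every $i$.

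Next I would invoke Lemma \ref{composite}, which guarantees $c_i\in F_1F_2$, so that $\Gal(F_1F_2/\QQ)$ genuinely acts on the coefficients $c_i$. Taking any $\sigma$ in the stabilizer of $f^2$ and applying it to $f^2=\sum_i c_i g_i$, and using that $\Gal(F_1F_2/\QQ)$ permutes the eigenbasis, say $\sigma(g_i)=g_{\sigma\cdot i}$, one gets $f^2=\sigma(f^2)=\sum_i \sigma(c_i)\,g_{\sigma\cdot i}$. Comparing this with $f^2=\sum_j c_j g_j$ in the basis $\{g_j\}$ produces the compatibility relation $c_{\sigma\cdot i}=\sigma(c_i)$.

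Finally, fix an index $i_0$. By the transitivity hypothesis, for every $i$ there is a $\sigma$ in the stabilizer of $f^2$ with $\sigma\cdot i_0=i$, and the relation above then gives $c_i=\sigma(c_{i_0})$. If $c_{i_0}$ vanished, all $c_i$ would vanish and $f^2=0$, contradicting $f\neq 0$; hence $c_{i_0}\neq 0$, and since a field automorphism carries nonzero elements to nonzero elements, $c_i=\sigma(c_{i_0})\neq 0$ for all $i$. Together with the reduction of the first step, this yields $\langle f^2,g_i\rangle\neq 0$ for all $i$. I expect the only genuinely delicate point to be the index bookkeeping in the relation $c_{\sigma\cdot i}=\sigma(c_i)$, namely checking that the permutation of the basis and the twisting of the coefficients are matched on the correct side; the orthogonality reduction and the final transitivity argument should be routine.
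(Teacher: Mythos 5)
Your proof is correct and follows essentially the same route as the paper's: both reduce $\langle f^2,g_i\rangle\neq 0$ to $c_i\neq 0$ via orthogonality of distinct eigenforms, then use the relation $c_{\sigma\cdot i}=\sigma(c_i)$ for $\sigma$ in the stabilizer of $f^2$ together with transitivity to conclude that the $c_i$ are either all zero (impossible since $f^2\neq 0$) or all nonzero. The only difference is presentational: the paper argues by contradiction and leaves the Galois-orbit bookkeeping and the role of Lemma \ref{composite} implicit, whereas you spell them out.
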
 
 \begin{proof}
 	Suppose by contradiction $\langle f^2,g_i\rangle=0$. For equation (\ref{identity}) we take the inner product with $g_i$ and use the orthogonality relation $\langle g_i,g_j\rangle =\delta_{ij}$ to find $c_i=0$. Then transitivity gives  $c_i=0$ for $1\leq i\leq d_2$, which implies that $f^2=0$, which is a contradiction. 
 \end{proof}
 
 \begin{lemma}\label{tran}
 	Assuming Maeda's conjecture for $\mathcal{S}_k$ and $\mathcal{S}_{2k}$, the stabilizer of $f^2$ in $\Gal(F_1F_2/\QQ)$ acts transitively on the set $B_2$ of cusp eigenforms of weight $2k$. 
 \end{lemma}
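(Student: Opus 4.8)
The plan is to translate the transitivity of the stabilizer into a statement about $G:=\Gal(F_1F_2/\QQ)$ viewed as a subgroup of $\mathfrak{S}_{d_1}\times\mathfrak{S}_{d_2}$, and then to invoke the elementary fact that every nontrivial normal subgroup of a symmetric group is automatically transitive. The first step is to simplify the stabilizer. Since the Galois action on $q$-expansions is a ring homomorphism, $\sigma(f^2)=(\sigma f)^2$ for every $\sigma\in G$, and by the lemma recalled above (the action of the Hecke field's Galois group permutes the normalized eigenbasis) $\sigma f$ is again a normalized weight-$k$ eigenform. Because the graded ring of modular forms for $SL_2(\ZZ)$ is the polynomial ring $\CC[E_4,E_6]$, hence an integral domain, the equality $(\sigma f)^2=f^2$ forces $\sigma f=\pm f$; the normalization $a_1(\sigma f)=a_1(f)=1$ eliminates the sign, so $\sigma f=f$. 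Hence the stabilizer of $f^2$ in $G$ coincides with the stabilizer of $f$.

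Next I would set up the group theory. Under Maeda's conjecture, the Conrey--Farmer lemma on the irreducibility of $T_{n,k}$ identifies $\Gal(F_1/\QQ)\cong\mathfrak{S}_{d_1}$ and $\Gal(F_2/\QQ)\cong\mathfrak{S}_{d_2}$ acting naturally on the eigenbases $B_1,B_2$, and the restriction maps $\pi_1,\pi_2$ exhibit $G$ as a subdirect product inside $\mathfrak{S}_{d_1}\times\mathfrak{S}_{d_2}$. I would then consider $N_2:=\pi_2(\ker\pi_1)$, which, as the image of a normal subgroup under the surjection $\pi_2$, is normal in $\mathfrak{S}_{d_2}$. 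Every element of $\ker\pi_1$ restricts to the identity on $F_1$ and so fixes $f$; therefore $N_2$ lies in the $\pi_2$-image of $\mathrm{Stab}_G(f)=\mathrm{Stab}_G(f^2)$. Since every nontrivial normal subgroup of $\mathfrak{S}_{d_2}$ is transitive on $d_2$ points (immediate from the classification of such subgroups, or from primitivity of $\mathfrak{S}_{d_2}$), the entire problem reduces to proving $N_2\neq 1$.

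The main obstacle is exactly this last point. By Goursat's lemma, $N_2=1$ would make $\mathfrak{S}_{d_2}$ a common quotient of $\mathfrak{S}_{d_1}$ and $\mathfrak{S}_{d_2}$, i.e. it would produce a surjection $\mathfrak{S}_{d_1}\twoheadrightarrow\mathfrak{S}_{d_2}$. This is impossible as soon as $d_2>d_1$, simply because a surjection cannot decrease order and $d_1!<d_2!$. I would then verify the dimension inequality $d_2>d_1$: for even $k\ge 12$ one has $2k\not\equiv 2\pmod{12}$, whence $d_2=\dim\mathcal{S}_{2k}=\lfloor k/6\rfloor\ge 2\lfloor k/12\rfloor>\lfloor k/12\rfloor\ge\dim\mathcal{S}_k=d_1$, and $k\ge 12$ is guaranteed because $f\neq 0$. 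The degenerate case $d_1=1$, where $F_1=\QQ$ and $\mathrm{Stab}_G(f)=G$, is even simpler, since then $G=\Gal(F_2/\QQ)=\mathfrak{S}_{d_2}$ is already transitive. The crux, and the only place the hypotheses are genuinely used, is converting Maeda's identification of the two Galois groups together with the strict dimension inequality $d_2>d_1$ into the nonvanishing of $N_2$.
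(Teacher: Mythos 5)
Your proof is correct, and its skeleton matches the paper's: both arguments hinge on the subgroup of $\Gal(F_1F_2/\QQ)$ fixing $F_1$ pointwise (your $\ker\pi_1$, whose image $N_2=\pi_2(\ker\pi_1)$ is exactly the paper's $H_2=\Gal\left(F_2/(F_1\cap F_2)\right)$), on the observation that such elements fix $f$ and hence $f^2$, and on deducing nontriviality of this normal subgroup from a degree count. Where you genuinely diverge is in extracting transitivity. The paper splits into cases: for $d_2\geq 5$ it invokes the classification of normal subgroups of $\mathfrak{S}_{d_2}$ to conclude $H_2\cong A_{d_2}$ or $\mathfrak{S}_{d_2}$, each transitive; for $d_2\leq 4$, where the classification admits the extra possibility $V_4\trianglelefteq \mathfrak{S}_4$, it instead proves $F_1\cap F_2=\QQ$ outright by a ramification/Minkowski argument resting on Sage-computed field discriminants for $(k,2k)=(24,48)$ and $(28,56)$ (Table 2). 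You bypass this dichotomy with the single uniform fact that \emph{every} nontrivial normal subgroup of $\mathfrak{S}_n$ is transitive (true for all $n$, including $V_4\subset\mathfrak{S}_4$, e.g.\ by primitivity), so no case analysis and no computer verification is needed; you also spell out the inequality $d_2>d_1$ that the paper compresses into the phrase ``degree considerations.'' What the paper's route buys is finer structural information --- $F_1\cap F_2$ is either $\QQ$ or quadratic, with an explicit description of $\Gal(F_1F_2/\QQ)$ in each case --- while yours buys uniformity, brevity, and independence from computation. One minor remark: your opening reduction $\mathrm{Stab}_G(f^2)=\mathrm{Stab}_G(f)$ (via the integral-domain argument) is more than you need; the trivial containment $\mathrm{Stab}_G(f)\subseteq \mathrm{Stab}_G(f^2)$ is all that your argument, and the paper's, actually uses.
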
 
 \begin{proof}
 	
 	We have 
 	$$
 	\Gal(F_1F_2/\QQ)\cong \{(\sigma_1,\sigma_2)\in \Gal(F_1/\QQ)\times \Gal(F_2/\QQ)\big| \sigma_1|_{F_1\cap F_2}=\sigma_2|_{F_1\cap F_2}\}.$$
  Note that $F_1\cap F_2$ is Galois over $\QQ$,since $F_1,F_2$ are both Galois over $\QQ$, and by elementry Galois theory, the subgroup $H_2$ of $\Gal(F_2/\QQ)$ that fixes  $F_1\cap F_2$ is normal.   
 	Assuming Maeda's conjecture, we have $\Gal(F_2/\QQ)\cong \mathfrak{S}_{d_2}$, $\Gal(F_1/\QQ)\cong \mathfrak{S}_{d_1}$, where $d_1=\dim \mathcal{S}_k$ and $d_2=\dim \mathcal{S}_{2k}$.  
 	
 	\medskip

 	If $d_2\geq 5$, then the only  normal subgroups of $\mathfrak{S}_{d_2}$ are $\mathfrak{S}_{d_2}$, the alternating group $A_{d_2}$ and $\{1\}$. Since $F_1\cap F_2\subsetneq F_2$ by degree considerations, we have $ H_2\neq \{1\}$, therefore $H_2\cong A_{d_2}$ or $H_2\cong \mathfrak{S}_{d_2}$. 
 	
 	\medskip
 
 	If $H_2\cong \mathfrak{S}_{d_2}$, then $F_1\cap F_2=\QQ$ and 
 	$$\Gal(F_1F_2/\QQ)\cong \Gal(F_1/\QQ)\times \Gal(F_2/\QQ)\cong \mathfrak{S}_{d_1}\times \mathfrak{S}_{d_2},$$  
 	using Maeda's conjecture to obtain the second isomorphism. The stabilizer of $f^2$  in $\Gal(F_1F_2/\QQ)$ contains a subgroup  isomorphic to $\mathfrak{S}_{d_2}$, hence acts transitively on $B_2$.

\medskip
 	
 	If $H_2\cong A_{d_2}$, then $$[F_1\cap F_2:\QQ]=|\Gal(F_1\cap F_2/\QQ)| =|\Gal(F_2/\QQ)|/|H_2|=2,$$ so $F_1\cap F_2$ is a quadratic field. Moreover we know that the subgroup of $\mathfrak{S}_{d_1}$ that fixes $F_1\cap F_2$ elementwise is a normal subgroup of index two, i.e., $A_{d_1}$. In this case, $F_1\cap F_2$ is generated by the square root of the discriminant of $T_{n,k}(x)$ or $T_{n,2k}(x)$.  
 	Then we have 	
 	$$\Gal(F_1F_2/\QQ)\cong\{(\sigma_1,\sigma_2)\in \mathfrak{S}_{d_1}\times \mathfrak{S}_{d_2}| \sgn \sigma_1=\sgn \sigma_2\}.$$
 	In this case, the stabilizer of $f^2$ in $\Gal(F_1F_2/\QQ)$ contains a subgroup isomorphic to $A_{d_{2}}$, which acts transitively on $B_2$.
 	
 	If $d_2\leq 4$, we have $\lfloor{2k/12}\rfloor\leq 4$, so $k\leq 28$. It suffices to show that for $12\leq k\leq 28$, $F_1\cap F_2=\QQ$. We only need to check the cases $(k,2k)=(24,48),(28,56)$ since for all other cases $\dim\mathcal{S}_k=1$ and $F_1=\QQ$. We check these two cases by showing that no prime ramifies in $F_1\cap F_2$. Then we obtain that $F_1\cap F_2=\QQ$ by Minkowski theorem. Indeed, if $p$ is a prime that ramifies in $F_1\cap F_2$, then $p$ ramifies in $F_1$ and $F_2$. Let $\alpha_1,\cdots,\alpha_{d_1}$ be the $d_1$ roots of the polynomial $T_{n,k}(x)$. Note that the splitting field $F_1$ of $T_{n,k}(x)$ can be written as the composite field of the $d_1$ isomorphic subfields $\QQ(\alpha_i)$, then we see that $p$ ramifies in each $\QQ(\alpha_i)$ by the tower property for ramification. Thus $p|D_{\QQ(\alpha_i)}$, where $D_{\QQ(\alpha_i)}$ is the discriminant of the field $\QQ(\alpha_i)$. Similarly $p|D_{\QQ(\beta_j)}$, where $\beta_1,\cdots,\beta_{d_2}$ are the $d_2$ roots of the polynomial $T_{n,2k}(x)$. Therefore $p|\gcd(D_{\QQ(\alpha_i)},D_{\QQ(\beta_j)})$. Thus it suffices to check that $\gcd(D_{\QQ(\alpha_i)},D_{\QQ(\beta_j)})=1$ for the two cases. One verifies this directly by Table 2, where the  first column is the weight and the second column is the discriminant $D_{\QQ(\alpha_i)}$ or $D_{\QQ(\beta_j)}$ computed by Sage\cite{sage}. 
 	 \end{proof} 
 	\begin{table}\label{table2}
 		\begin{center}
 		\begin{tabular}{ |c|l| }
 			\hline
 			$k$ & $D_{\QQ(\alpha_i)}$\\ \hline
 			 			24 & 144169 \\ \hline
 			
 			28 & 131 * 139 \\ \hline
 			
 			48 & 31 * 6093733 * 1675615524399270726046829566281283 \\ \hline
 			
 			56 & 41132621 * 48033296728783687292737439509259855449806941 \\ \hline
 		\end{tabular}
 		\caption*{Table 2}
 		\label{table:coprime}
 		\end{center}
 	\end{table}

 Now Theorem \ref{main} follows easily by combining Lemmas \ref{composite}-\ref{tran}.
\end{proof}
 \section*{Acknowledgements}
The author would like to thank Professor Matthew Stover for his guidance and encouragement on this research. Without his support, this paper could never come into being. The author also would like to thank Professor Benjamin Linowitz for his comments on an earlier draft of this paper.

%\nocite{*}
\bibliographystyle{plain}
\bibliography{eigenformIdentities}

\end{document}